\theoremstyle{plain}
\newtheorem{theorem}{Theorem}[section]
\newtheorem{corollary}[theorem]{Corollary}
\theoremstyle{definition}
\theoremstyle{remark}
\begin{document}

\title[Analytic Properties of the Sum $B_{1}(h,k)$]
{Analytic Properties of the Sum $B_{1}(h,k)$}

\author[E. Cetin]{Elif Cetin}
\address{Elif Cetin \\ Department of Mathematics \\ Uludag University \\ Bursa 16059, Turkey}
\email{elifc2@gmail.com}

\subjclass{Primary 11F20;Secondary 11C08}
\keywords{Hardy Sums, Dedekind Sums, Two-Term Polynomial Relation, Greatest Integer Function, $Y(h,k)$ Sums, $C_{1}(h,k)$ Sums, $B_{1}(h,k)$ Sums.}

\begin{abstract}
In \cite{csc}, Cetin et al. defined a new special finite sum which is
denoted by $B_{1}(h,k)$. In this paper, with the help of the Hardy and
Dedekind sums we will give many properties of the sum $B_{1}(h,k).$ Then we
will give the connections of this sum with the other well-known finite sums
such as the Dedekind sums, the Hardy sums, the Simsek sums $Y(h,k)$ and the
sum $C_{1}(h,k)$. By using the Fibonacci numbers and
two-term polynomial relation, we will also give a new property of the sum $B_{1}(h,k)$.
\end{abstract}

\maketitle

\section{Introduction}

The motivation of this paper is to investigate and study of the properties of the sums $B_{1}(h,k)$ which is defined by
\begin{equation}
	B_{1}(h,k)=\sum_{j=1}^{k-1}\left[ \frac{hj}{k}\right] \left( -1\right) ^{\left[ \frac{hj}{k}\right]}
\end{equation} 
where $ \left[ x\right] $ is the greatest
integer function which can be defined by the help of the sawtooth function $ \left( \left( x\right) \right) $, as follows:

\begin{equation*}
	\left( \left( x\right) \right) =\left\{ 
	\begin{array}{cc}
		x-\left[ x\right] -1/2 & \text{if }x\text{ is not an integer} \\ 
		0 & \text{if }x\text{ is an integer.}%
	\end{array}%
	\right.
\end{equation*}
Dedekind sums, which defined by Richard Dedekind in the nineteeth century,
is given with the below equality:%
\begin{equation*}
	s(h,k)=\sum_{j=1}^{k-1}\left( \left( \frac{hj}{k}\right) \right) \left(
	\left( \frac{j}{k}\right) \right) ,
\end{equation*}%
where $h$ is an integer, $k$ is a positive integer. The basic introduction
to the arithmetic properties of the Dedekind sum is \cite{Hans R.}. Dedekind
defined these sums with the help of the famous Dedekind eta function.
Although Dedekind sums arise in the transformation formula for the eta
function, they can be defined independently of the eta function. Dedekind
sums have many interesting properties but most important one is the
reciprocity theorem: When $h$ and $k$ are coprime positive integers, the
following reciprocity law holds \cite{dedekind}:%
\begin{equation}
	s(h,k)+s(k,h)=-\frac{1}{4}+\frac{1}{12}\left( \frac{h}{k}+\frac{k}{h}+\frac{1%
	}{hk}\right) .  \label{eq1}
\end{equation}%
The first proof of (\ref{eq1}) was given by Richard Dedekind in $1892$ \cite%
{dedekind}. After R. Dedekind, Apostol \cite{Apostol} and many authors have
given many different proofs \cite{Hans R.}. By using contour integration, in 
$1905$, Hardy, \cite{Hardy}, gave another proof of the reciprocity theorem.
In that work, Hardy also gave some finite arithmetical sums which are called
Hardy sums. These Hardy sums are also related to the Dedekind sums and have
many useful properties.

We are ready to recall the Hardy sums which are needed in the further
sections: If $h$ and $k\in \mathbb{Z}$ with $k>0$, the Hardy sums are
defined by%
\begin{eqnarray}
	S(h,k) &=&\sum\limits_{j\text{mod}k}(-1)^{j+1+[\frac{jh}{k}]},  \notag \\
	s_{1}(h,k) &=&\sum\limits_{j\text{mod}k}(-1)^{[\frac{jh}{k}]}\left( \left( 
	\frac{j}{k}\right) \right) ,  \notag \\
	s_{2}(h,k) &=&\sum\limits_{j\text{mod}k}(-1)^{j}\left( \left( \frac{j}{k}%
	\right) \right) \left( \left( \frac{jh}{k}\right) \right) ,  \label{eq2} \\
	s_{3}(h,k) &=&\sum\limits_{j\text{mod}k}(-1)^{j}\left( \left( \frac{jh}{k}%
	\right) \right) ,  \notag \\
	s_{4}(h,k) &=&\sum\limits_{j\text{mod}k}(-1)^{[\frac{jh}{k}]},  \notag \\
	s_{5}(h,k) &=&\sum\limits_{j\text{mod}k}(-1)^{j+[\frac{hj}{k}]}\left( \left( 
	\frac{j}{k}\right) \right) .  \notag
\end{eqnarray}%
We also note that some authors have called Hardy sums as Hardy-Berndt sums.
For $s_{5}(h,k)$, the below equality also holds true:%
\begin{equation}
	s_{5}(h,k)=\frac{1}{k}\sum_{j=1}^{k-1}j(-1)^{j+\left[ \frac{hj}{k}\right] }
	\label{eq3}
\end{equation}%
when $h$ and $k$ are odd, \cite{Berndt and goldberg}. Further, following
equations will be necessary in the next section, \cite{Pettet and
	sitaramachan}:%
\begin{eqnarray}
	\sum_{j=1}^{k-1}(-1)^{j+\left[ \frac{hj}{k}\right] }\left( \frac{j}{k}%
	\right) &=&s_{5}(h,k)-\frac{1}{2}S(h,k),  \label{eq4} \\
	\sum_{j=1}^{h-1}(-1)^{j+\left[ \frac{kj}{h}\right] }\left( \frac{j}{h}%
	\right) &=&s_{5}(k,h)-\frac{1}{2}S(k,h).  \notag
\end{eqnarray}%
Reciprocity law for the $s_{5}(h,k)$ is given by the following theorem:
\begin{theorem}
	\label{th1}Let $h$ and $k$ be coprime positive integers. If $h$ and $k$ are
	odd, then%
	\begin{equation}
		s_{5}(h,k)+s_{5}(k,h)=\frac{1}{2}-\frac{1}{2hk},  \label{eq5}
	\end{equation}%
	and if $h+k$ is odd then%
	\begin{equation}
		s_{5}(h,k)=s_{5}(k,h)=0.  \label{eqas}
	\end{equation}
\end{theorem}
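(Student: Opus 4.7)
The plan is to handle the two halves of Theorem \ref{th1} separately. For the vanishing statement $s_{5}(h,k)=s_{5}(k,h)=0$ when $h+k$ is odd, the idea is to apply the involution $j\mapsto k-j$ to the sum defining $s_{5}(h,k)$. Since $\gcd(h,k)=1$, the quantity $hj/k$ is never an integer for $1\le j\le k-1$, so $[h(k-j)/k]=h-1-[hj/k]$; moreover the sawtooth factor $((j/k))$ changes sign under $j\mapsto k-j$. A direct tally of signs shows that the summand $(-1)^{j+[hj/k]}((j/k))$ is multiplied by $(-1)^{h+k}$ under this substitution. When $h+k$ is odd this factor is $-1$, forcing $s_{5}(h,k)=-s_{5}(h,k)=0$; the assertion $s_{5}(k,h)=0$ follows from the symmetric involution $l\mapsto h-l$ applied to $s_{5}(k,h)$.

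For the reciprocity (\ref{eq5}) in the case that both $h$ and $k$ are odd, the plan is to evaluate the auxiliary double sum
\[ W=\sum_{a=1}^{k-1}\sum_{b=1}^{h-1}(-1)^{a+b}|ha-kb| \]
in two different ways and equate them; coprimality of $h$ and $k$ guarantees $ha\ne kb$ throughout this range, so the absolute value never vanishes. The first evaluation writes $|ha-kb|=(ha-kb)\,\mathrm{sgn}(ha-kb)$, distributes, and splits $W$ into two pieces weighted by $a$ and by $b$. For fixed $a$, the inner alternating sum $\sum_{b=1}^{h-1}(-1)^{b}\,\mathrm{sgn}(ha-kb)$ collapses to $(-1)^{[ha/k]}-1$ since $h-1$ is even. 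Applying (\ref{eq3}) to each of the two pieces then yields $W=hk\bigl(s_{5}(h,k)+s_{5}(k,h)\bigr)-hk+(h+k)/2$.

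The second evaluation uses $|ha-kb|=\max(ha-kb,0)+\max(kb-ha,0)$, confining $b$ to $\{1,\dots,[ha/k]\}$ or $\{[ha/k]+1,\dots,h-1\}$ in the two pieces. Evaluating the resulting alternating sums in closed form and invoking the parity identity $(-1)^{[ha/k]}=(-1)^{a+r_{a}}$, where $r_{a}=ha-k[ha/k]$ and the identity crucially uses that both $h$ and $k$ are odd, and then observing that $a\mapsto r_{a}$ permutes $\{1,\dots,k-1\}$, one obtains after elementary but lengthy arithmetic $W=-(h-1)(k-1)/2$. Equating the two expressions and solving for $s_{5}(h,k)+s_{5}(k,h)$ yields $1/2-1/(2hk)$, which is (\ref{eq5}). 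The main obstacle is the second evaluation: the bookkeeping is intricate, and the parity identity must be justified using the oddness hypothesis before the permutation property of $a\mapsto r_{a}$ can be used to eliminate the sum's dependence on $a$.
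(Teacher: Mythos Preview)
The paper does not prove Theorem~\ref{th1}: it simply states the result and cites the literature (\cite{Apostol-Vu}, \cite{Berndt}, \cite{Berndt and goldberg}, \cite{Goldberg}, \cite{Hardy}, \cite{Sitaramachandrarao}). So there is no ``paper's own proof'' against which to compare your argument; what you have supplied is a self-contained elementary proof of a result the paper takes as background.

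Your argument is correct. The vanishing half is the standard involution $j\mapsto k-j$, and your sign bookkeeping is right: the summand picks up the factor $(-1)^{h+k}$, so $s_5(h,k)=(-1)^{h+k}s_5(h,k)$ and the claim follows. For the reciprocity half, both evaluations of
\[
W=\sum_{a=1}^{k-1}\sum_{b=1}^{h-1}(-1)^{a+b}\,|ha-kb|
\]
check out. In the first, the collapse $\sum_{b=1}^{h-1}(-1)^b\,\mathrm{sgn}(ha-kb)=(-1)^{[ha/k]}-1$ uses exactly that $h-1$ is even, and then (\ref{eq3}) (which the paper records for $h,k$ odd) gives $W=hk\bigl(s_5(h,k)+s_5(k,h)\bigr)-hk+(h+k)/2$. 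In the second, writing $m=[ha/k]$ and $r_a=ha-km$, the inner sum over $b$ simplifies to
\[
-ha+\tfrac{hk}{2}+\Bigl(r_a-\tfrac{k}{2}\Bigr)(-1)^{a+r_a},
\]
once the parity identity $(-1)^m=(-1)^{a+r_a}$ (valid precisely because $h$ and $k$ are odd) is inserted. Summing $(-1)^a$ times this over $a$ and using that $a\mapsto r_a$ permutes $\{1,\dots,k-1\}$ gives $W=-(h-1)(k-1)/2$, and equating the two evaluations yields (\ref{eq5}). The ``lengthy arithmetic'' you allude to is genuinely routine, and the one nontrivial ingredient, the parity identity, you have correctly identified and justified.

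In short: your proof is sound and entirely independent of the paper, which offers no argument of its own for this theorem.
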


(\textit{cf}. \cite{Apostol-Vu}, \cite{Berndt}, \cite{Berndt and goldberg}, 
\cite{Goldberg}, \cite{Hardy}, \cite{Sitaramachandrarao} and the references
cited in each of these works).

The proof of the next reciprocity theorem was given by Hardy \cite{Hardy}
and Berndt \cite{Berndt3}:
\begin{theorem}
	Let $h$ and $k$ be coprime positive integers. Then%
	\begin{equation}
		S(h,k)+S(k,h)=1\text{ \ \ if \ \ }h+k\text{ is odd.}  \label{qsqt}
	\end{equation}
\end{theorem}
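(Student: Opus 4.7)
The plan is to give a direct lattice-point proof by evaluating a single auxiliary sum over the rectangle $[1,h-1]\times[1,k-1]$ in two different orders. Since $h+k$ is odd, exactly one of $h,k$ is even; by the symmetry of the conclusion $S(h,k)+S(k,h)=1$ in the pair $(h,k)$, I would assume without loss of generality that $h$ is odd and $k$ is even. Because $\gcd(h,k)=1$, the quantity $jh-ik$ never vanishes on the lattice rectangle, so $\operatorname{sgn}(jh-ik)\in\{\pm1\}$ is well defined there. I would then introduce
\begin{equation*}
A=\sum_{i=1}^{h-1}\sum_{j=1}^{k-1}(-1)^{i+j+1}\operatorname{sgn}(jh-ik)
\end{equation*}
as the object on which the whole argument turns.

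Next I would evaluate $A$ in the order ``$i$ first.'' For fixed $j$ the condition $jh>ik$ is equivalent to $i\le\left[\frac{jh}{k}\right]$, so
\begin{equation*}
\sum_{i=1}^{h-1}(-1)^{i}\operatorname{sgn}(jh-ik)=2\sum_{i=1}^{[jh/k]}(-1)^{i}-\sum_{i=1}^{h-1}(-1)^{i}.
\end{equation*}
With $h$ odd the last sum vanishes and the first collapses to $(-1)^{[jh/k]}-1$. After multiplying by $(-1)^{j+1}$ and summing over $j$, one obtains $A=S(h,k)-\sum_{j=1}^{k-1}(-1)^{j+1}=S(h,k)-1$, where $k$ even was used to kill off the geometric tail to $1$. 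I would then repeat the exact same manoeuvre with $i$ and $j$ interchanged: for fixed $i$ the analogous identity, using $k$ even this time to reduce $\sum_{j=1}^{k-1}(-1)^{j}=-1$, yields
\begin{equation*}
\sum_{j=1}^{k-1}(-1)^{j}\operatorname{sgn}(jh-ik)=-(-1)^{[ik/h]},
\end{equation*}
so that $A=-\sum_{i=1}^{h-1}(-1)^{i+1+[ik/h]}=-S(k,h)$.

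Equating the two evaluations gives $S(h,k)-1=-S(k,h)$, which is the desired reciprocity. The main obstacle I anticipate is purely bookkeeping: the two partial geometric sums $\sum(-1)^{i}$ and $\sum(-1)^{j}$ behave differently depending on the parity of their upper limits, and the conclusion $=1$ (rather than some other constant) only falls out when the parity hypothesis is deployed in exactly the right place in each of the two iterated sums. A clean way to prevent sign errors is to verify the identity once on a small example such as $(h,k)=(3,4)$, where $S(3,4)=3$, $S(4,3)=-2$, before starting the general computation; this also confirms that the intended convention for $\sum_{j\bmod k}$ in the definition of $S(h,k)$ is $j=1,\dots,k-1$.
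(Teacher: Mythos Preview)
Your proof is correct. The lattice-point double-counting with $A=\sum_{i,j}(-1)^{i+j+1}\operatorname{sgn}(jh-ik)$ goes through exactly as you describe: the parity assumption $h$ odd kills $\sum_{i=1}^{h-1}(-1)^i$, the assumption $k$ even turns $\sum_{j=1}^{k-1}(-1)^{j+1}$ into $1$ and $\sum_{j=1}^{k-1}(-1)^j$ into $-1$, and equating the two evaluations $A=S(h,k)-1$ and $A=-S(k,h)$ gives the result. The test case $(h,k)=(3,4)$ confirms both the sign bookkeeping and the summation convention $j=1,\dots,k-1$.

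As for comparison: the paper does not actually supply a proof of this theorem. It quotes the statement as a known reciprocity law and attributes proofs to Hardy~\cite{Hardy} (whose 1905 argument is analytic, via contour integration, as the introduction notes) and to Berndt--Evans~\cite{Berndt3}. Your argument is therefore not a reproduction of anything in the paper but an independent, fully elementary proof. Its spirit is close to the Berndt--Dieter two-term polynomial relation~(\ref{aaa}) that the paper does use elsewhere: setting $u=v=-1$ in~(\ref{aaa}) and interpreting the two sides essentially amounts to your double sum read in two orders. What your lattice-point formulation buys is transparency---one sees directly why the parity hypothesis $h+k$ odd is exactly what is needed to make the boundary sums collapse---at the cost of a little more manual sign tracking than a one-line specialization of~(\ref{aaa}) would require.
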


In the light of equation (\ref{qsqt}), Apostol \cite{Apostol-Vu} gave the
below result:

\begin{theorem}
	If both $h$ and $k$ are odd and $(h,k)=1$, then%
	\begin{equation}
		S(h,k)=S(k,h)=0.  \label{eq6}
	\end{equation}
\end{theorem}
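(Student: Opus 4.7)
The plan is to establish $S(h,k)=0$ through the involution $j\mapsto k-j$ on the summation index, and then obtain $S(k,h)=0$ by swapping the roles of $h$ and $k$. Writing out the defining sum, I would take
\[
S(h,k)=\sum_{j=1}^{k-1}(-1)^{j+1+\left[\frac{jh}{k}\right]},
\]
which matches the conventions used for the other Hardy sums introduced in (\ref{eq2}), where any residue making $jh/k$ an integer contributes trivially. Since $\gcd(h,k)=1$, the quotient $jh/k$ is never an integer for $1\le j\le k-1$, so one has the greatest-integer identity
\[
\left[\frac{(k-j)h}{k}\right]=\left[h-\frac{jh}{k}\right]=h-1-\left[\frac{jh}{k}\right].
\]

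The next step is to substitute this into the summand obtained after replacing $j$ by $k-j$. A direct computation gives
\[
(-1)^{(k-j)+1+\left[\frac{(k-j)h}{k}\right]}=(-1)^{h+k-j-\left[\frac{jh}{k}\right]}.
\]
Because both $h$ and $k$ are odd, $h+k$ is even, so this exponent has the opposite parity from $j+1+\left[jh/k\right]$; that is, the transformed summand equals $-(-1)^{j+1+\left[jh/k\right]}$. Thus the bijection $j\leftrightarrow k-j$ on $\{1,\ldots,k-1\}$ sends each term of $S(h,k)$ to its additive inverse, forcing $S(h,k)=-S(h,k)$ and hence $S(h,k)=0$. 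Interchanging the roles of $h$ and $k$ (using that $\gcd(k,h)=1$, both odd, $k+h$ even) yields $S(k,h)=0$ in exactly the same way.

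I expect the main obstacle to be cosmetic rather than conceptual: one must justify that $\{1,\ldots,k-1\}$ is the effective summation range (so that no ``boundary'' term breaks the pairing) and that the step $[h-jh/k]=h-1-[jh/k]$ is legitimate, both of which rest solely on $\gcd(h,k)=1$. Once these routine points are in place, the parity of $h+k$ does all of the work, and the result follows without needing to invoke the reciprocity law (\ref{qsqt}) beyond the spirit in which Apostol framed it.
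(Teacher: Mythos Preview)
Your argument is correct. The involution $j\mapsto k-j$ has no fixed point on $\{1,\dots,k-1\}$ since $k$ is odd, and your parity computation shows each summand is sent to its negative when $h+k$ is even; hence $S(h,k)=0$, and symmetry gives $S(k,h)=0$.

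As for comparison: the paper does not actually supply a proof of this statement. It simply attributes the result to Apostol--Vu \cite{Apostol-Vu}, with the contextual remark that it was given ``in the light of'' the reciprocity law (\ref{qsqt}). Note that (\ref{qsqt}) itself applies only when $h+k$ is odd, so it is not directly usable here; the phrase is presumably just situating the result within Apostol--Vu's treatment rather than indicating a method. Your direct pairing argument is the standard elementary route and is entirely self-contained---it does not rely on any reciprocity identity, and it in fact also yields the parallel assertion in Theorem~1.4 that $S(h,k)=0$ whenever $h+k$ is even.
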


The following two theorems give the relations between the Hardy-Berndt sums
and the Dedekind sums $s(h,k)$:
\begin{theorem}
	\cite{Pettet and sitaramachan} Let $(h,k)=1$. Then 
	\begin{equation}
		S(h,k)=8s(h,2k)+8s(2h,k)-20s(h,k),\text{ if \ }h+k\text{ is odd,}
		\label{ef1}
	\end{equation}%
	\begin{equation}
		s_{1}(h,k)=2s(h,k)-4s(h,2k),\text{ if }h\text{ is even,}  \label{ef2}
	\end{equation}%
	\begin{equation}
		s_{2}(h,k)=-s(h,k)+2s(2h,k),\text{ if }k\text{ is even,}  \label{ef3}
	\end{equation}%
	\begin{equation}
		s_{3}(h,k)=2s(h,k)-4s(2h,k),\text{ if }k\text{ is odd,}  \label{ef4}
	\end{equation}%
	\begin{equation}
		s_{4}(h,k)=-4s(h,k)+8s(h,2k),\text{ if }h\text{ is odd,}  \label{ef5}
	\end{equation}%
	\begin{equation}
		s_{5}(h,k)=-10s(h,k)+4s(2h,k)+4s(h,2k),\text{ if }h+k\text{ is even and}
		\label{ef6}
	\end{equation}%
	Each one of $S(h,k)$ ($h+k$ even), $s_{1}(h,k)$ ($h$ odd), $s_{2}(h,k)$ ($k$
	odd), $s_{3}(h,k)$ ($k$ even), $s_{4}(h,k)$ ($h$ even), $s_{5}(h,k)$ ($h+k$
	odd) is zero.
\end{theorem}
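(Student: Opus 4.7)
The plan rests on the duplication identity for the sawtooth function: for any real $y$ with $2y \notin \mathbb{Z}$,
\[ ((2y)) = ((y)) + ((y + 1/2)), \]
which one verifies by splitting into the cases $\{y\} < 1/2$ and $\{y\} \geq 1/2$. A one-line consequence, obtained by computing $((2y))-2((y))$ in each case, is
\[ (-1)^{[2y]} = 2\bigl(((2y)) - 2((y))\bigr), \]
so every sign factor $(-1)^{[hj/k]}$ appearing in the definitions of the Hardy sums converts into a linear combination of the plain sawtooth values $((hj/k))$ and $((hj/(2k)))$. The alternating factors $(-1)^j$, whenever they appear, are handled analogously or are kept in order to exploit the antisymmetry $j \mapsto k-j$.

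In parallel, I would decompose the ``doubled'' Dedekind sums on the right-hand side into a ``bulk'' piece equal to $s(h,k)$ plus a ``shifted'' remainder. Splitting the index of $s(h,2k) = \sum_{j=1}^{2k-1}((hj/(2k)))((j/(2k)))$ by the parity of $j$ isolates the even-$j$ contribution as $s(h,k)$, while the odd-$j$ part becomes a half-shifted sum that I shall call $T_{1}(h,k)$. Symmetrically, applying the duplication identity inside $s(2h,k)$ writes it as $s(h,k)$ plus $T_{2}(h,k) = \sum_{j=1}^{k-1}((hj/k + 1/2))((j/k))$. Thus both $s(h,2k)$ and $s(2h,k)$ are explicitly expressed as $s(h,k)$ plus one of these two half-shifted tails.

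The core of the argument then reduces each of the six identities to checking that the Hardy sum on the left, after the substitution described in the first paragraph, equals exactly the prescribed linear combination of $s(h,k)$, $T_{1}(h,k)$, and $T_{2}(h,k)$. This match is carried out by reindexing (for example $j \mapsto 2j-1$ to extract the odd residues modulo $2k$, or $j \mapsto j+k$ to realize the half-shift) and then invoking the stated parity hypothesis on $h$, $k$, or $h+k$ to kill the boundary and cross terms that would otherwise obstruct the identification. The final ``each is zero'' assertions are treated by the same reduction, after which the resulting combination collapses, or else by a direct $j \mapsto k-j$ symmetry in the defining sum.

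The main obstacle, as I see it, is bookkeeping: the six identities have subtly different parity hypotheses, and the boundary values of the sawtooth function (it vanishes at integers, so the duplication formula picks up corrections when $2y \in \mathbb{Z}$) create exceptional terms that must be tracked case by case. The identity for $s_{5}$ is the most delicate, since it blends both types of shifted sum; the specific coefficients $-10$, $4$, $4$ emerge only after the cross-cancellations between $T_{1}(h,k)$ and $T_{2}(h,k)$ have been accounted for.
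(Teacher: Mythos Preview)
The paper does not give its own proof of this theorem: it is stated as a quotation from Pettet and Sitaramachandrarao and used purely as background, with no argument supplied. So there is nothing in the paper to compare your proposal against.

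For what it is worth, your outline is essentially the method of the original source. The substitution you derive, $(-1)^{[2y]} = 2((2y)) - 4((y))$, is precisely the identity the paper itself records as equation~(\ref{peela}) and attributes to Sitaramachandrarao; combining it with the sawtooth duplication $((2y)) = ((y)) + ((y+1/2))$ and the parity-split of $s(h,2k)$ and $s(2h,k)$ into $s(h,k)$ plus a half-shifted tail is exactly how the cited paper proceeds. Your plan is therefore on the right track. What you have written, however, is an outline rather than a proof: the ``core of the argument'' paragraph only asserts that the reindexings and parity cancellations go through, and the bookkeeping you yourself flag as the main obstacle---the boundary corrections when $2y\in\mathbb{Z}$ and the cross-terms in the $s_{5}$ case---is not actually carried out. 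If you intend this as a self-contained proof you would need to execute at least one of the six cases in full and indicate clearly how the others follow by the same template.
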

\begin{theorem}
	\cite{Sitaramachandrarao} For $h+k$ is odd and $(h,k)=1$ with $k>0,$ then%
	\begin{equation}
		S(h,k)=4s(h,k)-8s(h+k,2k).  \label{ewss}
	\end{equation}
\end{theorem}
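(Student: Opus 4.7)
The plan is to reduce (\ref{ewss}) to an equivalent Dedekind-sum identity by invoking (\ref{ef1}), and then prove that auxiliary identity by expanding from the definition of $s(\cdot,\cdot)$ and splitting the summation index by parity.

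\textbf{Step 1 (Reduction).} Both (\ref{ef1}) and (\ref{ewss}) are stated under the same hypothesis: $(h,k)=1$ and $h+k$ odd. Equating the two right-hand sides, (\ref{ewss}) is equivalent to the Dedekind-sum identity
\begin{equation*}
s(h+k,2k) \;=\; 3\,s(h,k) \;-\; s(h,2k) \;-\; s(2h,k).
\end{equation*}
It suffices to establish this.

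\textbf{Step 2 (Parity splits).} I would expand each of $s(h+k,2k)$, $s(h,2k)$, and $s(2h,k)$ using the definition $s(a,b)=\sum_{j=1}^{b-1}((aj/b))((j/b))$ and separate the index $j$ into even and odd residues. For $s(h+k,2k)$ and $s(h,2k)$, the even indices $j=2m$ with $m=1,\ldots,k-1$ contribute exactly $((hm/k))((m/k))$, because $((x+\text{integer}))=((x))$; this yields one copy of $s(h,k)$ from each. The odd indices $j=2m+1$ leave residual sums whose first sawtooth factors differ only by a $1/2$-shift, since
\begin{equation*}
\frac{(h+k)(2m+1)}{2k} \;=\; \frac{h(2m+1)}{2k} + m + \tfrac{1}{2}.
\end{equation*}
For $s(2h,k)$, I would instead apply Hermite's identity $((2x))=((x))+((x+\tfrac12))$ directly inside the summand, writing $s(2h,k)$ as $s(h,k)$ plus a residual sum of the shifted form $\sum_{j=1}^{k-1}((hj/k+\tfrac12))((j/k))$.

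\textbf{Step 3 (Collapsing the residual sums).} After the expansions, the identity in Step~1 reduces to an equality between residual sums built from sawtooth values at arguments of the form $h(2m+1)/(2k)+\tfrac12$ and $hj/k+\tfrac12$. I would apply Hermite's identity in the reverse direction,
\begin{equation*}
\left(\!\left(\tfrac{h(2m+1)}{2k}\right)\!\right)+\left(\!\left(\tfrac{h(2m+1)}{2k}+\tfrac12\right)\!\right)
= \left(\!\left(\tfrac{h(2m+1)}{k}\right)\!\right),
\end{equation*}
to combine the odd-$j$ residuals arising from $s(h+k,2k)$ and $s(h,2k)$ into a single sum indexed by odd residues, and then use the fact that the map $m\mapsto 2m+1\pmod k$ is a bijection of $\{0,1,\ldots,k-1\}$ (which uses $\gcd(2,k)$ matching the parity hypothesis) to reindex this sum onto $\{1,\ldots,k-1\}$ in the form appearing in the residual from $s(2h,k)$, where it cancels.

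\textbf{Main obstacle.} The chief difficulty is the bookkeeping rather than any deep estimate: Hermite's identity fails at exact integers, so one must verify that none of the arguments $h(2m+1)/(2k)$ or $hj/k$ is an integer (this is ensured by $(h,k)=1$ together with $h+k$ odd), and one must separately track the two subcases $k$ odd and $k$ even when reindexing odd residues mod $2k$ to residues mod $k$. Once these routine residue manipulations are done, the three residual sums cancel identically and the Dedekind-sum identity of Step~1, hence (\ref{ewss}), follows.
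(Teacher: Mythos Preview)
The paper does not supply a proof of this theorem at all: equation~(\ref{ewss}) is quoted as a known result from \cite{Sitaramachandrarao}, with no argument given, so there is nothing in the paper to compare your attempt against.

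As for your proposal itself, the reduction in Step~1 is clean and correct: once (\ref{ef1}) is granted, (\ref{ewss}) is equivalent to the pure Dedekind-sum identity $s(h+k,2k)+s(h,2k)+s(2h,k)=3s(h,k)$. Your Step~2 splittings are right, and after applying Hermite one is left with showing
\[
\sum_{m=0}^{k-1}\left(\!\left(\tfrac{h(2m+1)}{k}\right)\!\right)\left(\!\left(\tfrac{2m+1}{2k}\right)\!\right)
=-\sum_{j=1}^{k-1}\left(\!\left(\tfrac{hj}{k}+\tfrac12\right)\!\right)\left(\!\left(\tfrac{j}{k}\right)\!\right),
\]
which does hold. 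However, your Step~3 justification is too vague to count as a proof: the claimed bijection $m\mapsto 2m+1\pmod k$ is only a bijection of $\mathbb{Z}/k\mathbb{Z}$ when $k$ is odd, whereas the hypothesis ``$h+k$ odd'' allows $k$ even just as well, and even in the odd case the factor $(((2m+1)/(2k)))$ is not determined by $2m+1\bmod k$ alone. A working route is to write $(((2m+1)/(2k)))=((m/k))+1/(2k)$ for $m=1,\dots,k-1$ (with a boundary correction at $m=0$), use $((h(2m+1)/k))=((2hm/k+h/k))$ together with Hermite once more, and then exploit that $\sum_{j\bmod k}((hj/k+\alpha))=0$ for any fixed $\alpha$ to kill the $1/(2k)$ cross-term. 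So the strategy is sound, but the bookkeeping you flag as the ``main obstacle'' is genuinely where the content lies and is not yet carried out.
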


Next theorem gives infinite series representation of the Hardy-Berndt sums:

\begin{theorem}
\cite{Berndt and goldberg} Let $h$ and $k$ denote relatively prime integers with $k>0$. If $h+k$ is odd, then
\begin{equation}
S(h,k)=\frac{4}{\pi }\sum_{n=1}^{\infty }\frac{1}{2n-1}\tan \left( \frac{\pi
	h(2n-1)}{2k}\right) ;  \label{ew1}
\end{equation}
if $h$ is even and $k$ is odd, then
\begin{equation}
s_{1}(h,k)=-\frac{2}{\pi}\sum_{\substack{n=1\\2n-1\not\equiv 0 (\textrm{mod}\ k)}}^{\infty}\frac{1}{2n-1}\cot\left(\ \frac{\pi h\left(2n-1\right)}{2k}\right); \label{ew2}
\end{equation}
if $h$ is odd and $k$ is even, then
\begin{equation}
s_{2}(h,k)=-\frac{1}{2\pi}\sum_{\substack{n=1\\2n\not\equiv 0(\textrm{mod}\ k)}}^{\infty}\frac{1}{n}\tan\left(\frac{\pi hn}{k}\right); \label{ew3}
\end{equation}
if $k$ is odd, then
\begin{equation}
s_{3}(h,k)=\frac{1}{\pi}\sum_{n=1}^{\infty}\frac{1}{n}\tan\left(\frac{\pi hn}{k}\right); \label{ew4}
\end{equation}
if $h$ is odd, then
\begin{equation}
s_{4}(h,k)=\frac{4}{\pi}\sum_{n=1}^{\infty}\frac{1}{2n-1}\cot\left(\frac{\pi h(2n-1)}{2k}\right); \label{ew5}
\end{equation}
and if $h$ and $k$ are odd, then
\begin{equation}
s_{5}(h,k)=\frac{2}{\pi}\sum_{\substack{n=1\\2n-1\not\equiv 0(\textrm{mod}\ k)}}^\infty\frac{1}{2n-1}\tan\left( \frac{\pi h(2n-1)}{2k}\right). \label{ew6}
\end{equation}
\end{theorem}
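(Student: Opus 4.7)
\medskip
\noindent\textbf{Proof proposal.} My plan is to replace each Hardy--Berndt sum by an infinite trigonometric series through a Fourier-series substitution, interchange the two summations, and then evaluate the resulting inner finite sum in closed form. The two classical Fourier expansions I will use are the sawtooth series
\begin{equation*}
((x))=-\frac{1}{\pi}\sum_{n=1}^{\infty}\frac{\sin(2\pi nx)}{n}\qquad (x\notin\mathbb{Z}),
\end{equation*}
and the square-wave expansion
\begin{equation*}
(-1)^{[x]}=\frac{4}{\pi}\sum_{n=1}^{\infty}\frac{\sin((2n-1)\pi x)}{2n-1}\qquad (x\notin\mathbb{Z}).
\end{equation*}
In each of the six sums $S,s_{1},\dots ,s_{5}$ exactly one factor admits a clean Fourier series (the $(-1)^{[hj/k]}$ factor in $S$, $s_{4}$, $s_{5}$, and the sawtooth factor in $s_{1}$, $s_{2}$, $s_{3}$), so the strategy applies uniformly.

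The concrete execution for, say, $s_{3}(h,k)$ with $k$ odd illustrates the method. Substituting the sawtooth expansion for $((jh/k))$ yields
\begin{equation*}
s_{3}(h,k)=-\frac{1}{\pi}\sum_{n=1}^{\infty}\frac{1}{n}\sum_{j=1}^{k-1}(-1)^{j}\sin\!\left(\tfrac{2\pi nhj}{k}\right).
\end{equation*}
The inner sum is the imaginary part of a geometric progression with ratio $-e^{2\pi inh/k}$; a direct computation, using $(-1)^{k}=-1$ and the identity $\sin(2x)/(1+\cos(2x))=\tan x$, collapses it to $-\tan(\pi nh/k)$, which recovers \eqref{ew4}. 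The remaining five identities are handled by the same template: substitute, swap the order of summation, and reduce the inner finite sum via the closed form of $\sum_{j=0}^{k-1}(-\zeta)^{j}$ or $\sum_{j=0}^{k-1}\zeta^{2j-1}$, depending on whether a $(-1)^{j}$ or odd-harmonic factor appears. The parity hypothesis in each case (for example $h+k$ odd in \eqref{ew1}, $k$ odd in \eqref{ew2}, $h$ odd in \eqref{ew5}) is precisely what forces the geometric numerator to cancel into a single tangent or cotangent rather than into a more complicated quotient, and it is also what eliminates the ``resonant'' indices $n$ for which the denominator would otherwise vanish.

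For the two sums weighted by $((j/k))$ and $(-1)^{[hj/k]}$ together---namely $s_{1}$ and $s_{5}$---a small additional wrinkle appears: after substituting the square-wave series for $(-1)^{[hj/k]}$ and using $((j/k))=j/k-1/2$, the inner sum is $\sum_{j=1}^{k-1}j\sin((2n-1)\pi hj/k)$, which I will evaluate either by differentiating the geometric series $\sum e^{i\alpha j}$ in $\alpha$, or equivalently by Abel summation against the closed-form partial sums of the sines. Once simplified under the stated parity, the $j$-weighted pieces produce the $(2n-1)^{-1}\tan$ or $(2n-1)^{-1}\cot$ expressions, and the constant piece cancels against a residual $s_{4}$- or $S$-type contribution.

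The main obstacle is the justification of the interchange of summation, because the Fourier series for $((x))$ and for $(-1)^{[x]}$ converge only conditionally. I will handle this in the standard way: truncate the outer series at $N$, invoke a uniform bound on the Dirichlet-type partial sums $\sum_{n\le N}\sin((2n-1)\pi x)/(2n-1)$ away from the jump points $x\in\mathbb{Z}$ (the only such points in each finite sum are $j=0$ and, when $k\mid nh$, certain $n$, which are excluded in the statements of \eqref{ew2}, \eqref{ew3}, \eqref{ew6}), and then pass to the limit via Abel summation or dominated convergence on the finite index set $1\le j\le k-1$. The remaining verification---matching constants and signs to obtain \eqref{ew1}--\eqref{ew6} exactly---is a short bookkeeping exercise.
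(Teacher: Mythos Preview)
The paper does not supply a proof of this theorem; it is quoted verbatim from Berndt and Goldberg with a citation and nothing more, so there is no in-paper argument to compare your proposal against.

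On its own merits your Fourier-substitution template is a sound and essentially classical route to these identities. The $s_{3}$ computation you work out is correct, and the same geometric-series collapse, driven by the parity hypothesis, handles $S$, $s_{2}$, and $s_{4}$ with only cosmetic changes. Two small cautions. First, for $s_{1}$ and $s_{5}$ the ``short bookkeeping'' is longer than you suggest: after writing $((j/k))=j/k-1/2$, the weighted sum $\sum_{j}(-1)^{j}j\sin(\alpha j)$ (or its unsigned analogue) produces, upon differentiating the geometric series, a quotient whose simplification to a single $\tan$ or $\cot$ genuinely uses both parity assumptions and a couple of half-angle identities; and the contribution from the constant $-1/2$ piece brings in an $S(h,k)$- or $s_{4}(h,k)$-type term that must be shown to vanish (for $s_{5}$ this is the separate fact $S(h,k)=0$ when $h,k$ are both odd). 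None of this is a gap, but it is more than a line. Second, your justification of the interchange is fine in outline---the inner index set is finite and the Dirichlet partial sums are uniformly bounded away from the integers---so dominated convergence on $1\le j\le k-1$ is exactly the right tool; just be explicit that the excluded indices in \eqref{ew2}, \eqref{ew3}, \eqref{ew6} are precisely those $n$ for which the relevant inner sum is identically zero, so no term is being discarded illegitimately.
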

Now we will give the finite series representation of the Hardy-Berndt sums:
\begin{theorem}
\cite{Berndt and goldberg} Let $h$ and $k$ be coprime integers with $k>0$.
If $h+k$ is odd, then
\begin{equation}
S(h,k)=\frac{1}{k}\sum_{j=1}^{k} \tan \left( \frac{\pi h(2j-1)}{2k}\right)\cot\left(\frac{\pi(2j-1)}{2k}\right), \label{ress}
\end{equation}
if $h$ is even and $k$ is odd, then
\begin{equation*}
s_1(h,k)=-\frac{1}{2k}\sum_{\substack{j=1\\j\neq\frac{k+1}{2}}}^{k}\cot\left(\frac{\pi h(2j-1)}{2k}\right)\cot\left(\frac{\pi (2j-1)}{2k}\right),
\end{equation*}
if $h$ is odd and $k$ is even, then
\begin{equation*}
s_2(h,k)=-\frac{1}{4k}\sum_{\substack{j=1\\j \neq \frac{k}{2}}}^{k-1}\tan\left(\frac{\pi hj}{k}\right)\cot\left(\frac{\pi j}{k}\right),
\end{equation*}
if $k$ is odd, then
\begin{equation*}
s_{3}(h,k)=\frac{1}{2k}\sum_{j=1}^{k-1}\tan \left( \frac{\pi hj}{k}\right)
\cot \left( \frac{\pi j}{k}\right) ,
\end{equation*}
if $h$ is odd, then 
\begin{equation*}
s_{4}(h,k)=\frac{1}{k}\sum_{j=1}^{k}\cot \left( \frac{\pi h(2j-1)}{2k}%
\right) \cot \left( \frac{\pi (2j-1)}{2k}\right) ,
\end{equation*}%
and if $h$ and $k$ are odd, then%
\begin{equation}
s_{5}(h,k)=\frac{1}{2k}\sum_{\substack{ j=1  \\ j\neq \frac{k+1}{2}}}^{k}\tan
\left( \frac{\pi h(2j-1)}{2k}\right) \cot \left( \frac{\pi (2j-1)}{2k}
\right) .  \label{ress2}
\end{equation}
\end{theorem}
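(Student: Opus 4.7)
My plan is to derive each of the six finite-sum identities from the corresponding infinite-series representation of the preceding theorem by a common three-step procedure. In each case, the trigonometric factor is periodic in the summation variable: $\tan(\pi h(2n-1)/(2k))$ and $\cot(\pi h(2n-1)/(2k))$ depend only on $r=(2n-1)\bmod 2k$, while $\tan(\pi h n/k)$ depends only on $n\bmod k$. So the idea is, first, to collect the terms of the infinite series by residue class of the summation index; second, to pair each residue $r$ with its conjugate ($2k-r$ or $k-r$, respectively) so that the individually divergent inner arithmetic series combine into conditionally convergent differences; and third, to evaluate those differences in closed form via the partial-fraction expansion
\begin{equation*}
\pi\cot(\pi x)=\sum_{m=0}^{\infty}\!\left(\frac{1}{x+m}-\frac{1}{m+1-x}\right),
\end{equation*}
which upon specialization $x=r/(2k)$ yields the key identity
\begin{equation*}
\sum_{m=0}^{\infty}\!\left(\frac{1}{2mk+r}-\frac{1}{2mk+2k-r}\right)=\frac{\pi}{2k}\cot\!\left(\frac{\pi r}{2k}\right),
\end{equation*}
with the analogous formula (with $k$ in place of $2k$) for the cases in which the period is $k$.

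To illustrate, starting from \eqref{ew1} and grouping by $r=(2n-1)\bmod 2k$, the sign flip $\tan(\pi h(2k-r)/(2k))=-\tan(\pi h r/(2k))$ pairs $r$ with $2k-r$; inserting the above evaluation gives
\begin{equation*}
S(h,k)=\frac{2}{k}\sum_{\substack{r\text{ odd}\\ 1\le r\le k-1}}\tan\!\left(\frac{\pi h r}{2k}\right)\cot\!\left(\frac{\pi r}{2k}\right).
\end{equation*}
The product $\tan(\pi h r/(2k))\cot(\pi r/(2k))$ is itself invariant under $r\mapsto 2k-r$, so symmetrization doubles the range and halves the constant, producing the stated formula \eqref{ress} after reindexing $r=2j-1$; the parity hypothesis $h+k$ odd forces the putative middle contribution at $r=k$ to be absent (when $k$ is even) or to vanish because $\tan(\pi h/2)=0$ (when $h$ is even). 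The five remaining identities follow by the same procedure: for $s_{1}$ and $s_{5}$ the front constant in the infinite series is half as large, producing the final $1/(2k)$, and the singular residue $r=k$ is precisely the excluded index $j=(k+1)/2$; for $s_{2}$, $s_{3}$, $s_{4}$ the natural period is $k$ rather than $2k$, pairs are $r\leftrightarrow k-r$, and for $s_{2}$ the singular residue $r=k/2$ is the excluded index $j=k/2$.

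The main technical obstacle will be justifying the rearrangement, since each inner arithmetic series over a single residue class is divergent. The cleanest way is to truncate the outer infinite series at $2n-1\le 2Nk-1$, carry out the pairing and the partial-fraction evaluation as a finite, exact manipulation, and then pass to the limit $N\to\infty$ using the $O(1/m^{2})$ rate of convergence provided by the partial-fraction identity. A secondary point to verify case by case is that the parity hypotheses guarantee that the would-be middle term is either identically zero or matches precisely the excluded summation index in the stated formula.
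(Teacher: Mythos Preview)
The paper does not give its own proof of this theorem: it is quoted verbatim from Berndt and Goldberg \cite{Berndt and goldberg} and used only as input for later results. So there is no in-paper argument against which to compare your proposal.

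That said, your plan is sound and is in fact the standard route taken in the cited source: one starts from the infinite-series representations \eqref{ew1}--\eqref{ew6}, splits the summation index into residue classes modulo $2k$ (or $k$), uses the odd symmetry $\tan(\pi h - x)=-\tan x$, $\cot(\pi h - x)=-\cot x$ to pair $r$ with $2k-r$ (or $k-r$), and then recognizes the resulting arithmetic difference as the Mittag-Leffler expansion of $\frac{\pi}{2k}\cot(\pi r/(2k))$. Your bookkeeping of the constants and of the excluded indices is correct in each of the six cases; in particular you have identified exactly why the parity hypotheses are needed: they are precisely what make the self-paired ``middle'' residue either absent, zero, or equal to the omitted index in the stated finite sum.

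One small refinement worth noting for the write-up: when you truncate at $2n-1\le 2Nk-1$, the number of terms in each residue class is not literally the same (it is $N$ for small $r$ and $N-1$ for large $r$), so the inner truncated sums are not exactly the $N$-th partial sums of the cotangent expansion. The discrepancy is a single term of size $O(1/(Nk))$ per residue class, hence harmless, but you should say so explicitly rather than asserting the pairing is an ``exact'' finite manipulation.
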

In \cite{TJM}, Simsek gave the following new sums: Let $h$ is an integer and$%
\ k$ is a positive integer with $(h,k)=1$. Then
\begin{equation*}
Y(h,k)=4k\sum\limits_{j\text{mod}k}(-1)^{j+[\frac{hj}{k}]}\left( \left( 
\frac{j}{k}\right) \right) .
\end{equation*}
We observe that $Y(h,k)$ sums are also related to the Hardy sums $s_{5}(h,k)$. That is
\begin{equation}
Y(h,k)=4ks_{5}(h,k).  \label{stay}
\end{equation}
Reciprocity law for this sum was given by Simsek in \cite[p. 5, Theorem 4]{TJM} as below:
\begin{equation}
hY(h,k)+kY(k,h)=2hk-2.  \label{eqq}
\end{equation}
Simsek gave two different proofs of this reciprocity law. Another proof was also given in \cite{csc}.

In this paper we study the Hardy sums, the Simsek sums $Y(h,k)$ and the
Dedekind sums $s(h,k)$ which are related to the symmetric pairs, \cite{jeffrey}, and the Fibonacci numbers. Before starting our results, we need some properties of the Fibonacci numbers which are given as follows:
The Fibonacci numbers are defined by means of the following generating function \cite{fibonacci}:
\begin{equation}
F(x)=\frac{x}{1-x-x^{2}}=\sum_{n=0}^{\infty }F_{n}x^{n}.  \label{F1}
\end{equation}
One can easily derive the following recurrence relation from (\ref{F1}):
\begin{equation*}
F_{n+1}=F_{n}+F_{n-1}.
\end{equation*}
From (\ref{F1}), we also easily compute the first few Fibonacci numbers as
follows: $0,1,1,2,3,5,8,13,21,\cdots $
In \cite{jeffrey}, Meyer studied a special case of the Dedekind sums. In that paper, Meyer investigated the pairs of integers $\left\{ h,k\right\} $ for which $s(h,k)=s(k,h)$. Meyer defined that $\left\{ h,k\right\} $ is a symmetric pair if this property holds and he showed that $\left\{h,k\right\} $ is a symmetric pair if and only if $h=F_{2n+1}$ and $k=F_{2n+3} $ for $n\in \mathbb{N}$ where $F_{m}$ is the $m-$th Fibonacci number. In \cite{jeffrey}, Meyer proved the following theorem:
\begin{theorem}
\label{th3}If $(h,k)=1$ and $\left\{ h,k\right\} $ is a symmetric pair, then 
$s(h,k)=0$.
\end{theorem}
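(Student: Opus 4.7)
The plan is to combine the hypothesis of symmetry with the Dedekind reciprocity law to reduce the statement to a purely arithmetic identity about Fibonacci numbers. Since $\{h,k\}$ is symmetric, $s(h,k)=s(k,h)$, so equation (\ref{eq1}) becomes
\begin{equation*}
2s(h,k) \;=\; -\frac{1}{4}+\frac{1}{12}\left(\frac{h}{k}+\frac{k}{h}+\frac{1}{hk}\right) \;=\; \frac{h^{2}+k^{2}+1-3hk}{12hk}.
\end{equation*}
Thus $s(h,k)=0$ is equivalent to the Diophantine relation $h^{2}+k^{2}+1=3hk$, so the entire problem collapses to verifying that the pair $(h,k)=(F_{2n+1},F_{2n+3})$ satisfies this identity for every $n\in\mathbb{N}$.

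To establish $F_{2n+1}^{2}+F_{2n+3}^{2}+1=3F_{2n+1}F_{2n+3}$, I would first extract from the Fibonacci recurrence the auxiliary three-term relation
\begin{equation*}
F_{2n+3}=F_{2n+2}+F_{2n+1}=2F_{2n+1}+F_{2n}=3F_{2n+1}-F_{2n-1},
\end{equation*}
so that, writing $a_{n}=F_{2n+1}$, the odd-indexed Fibonacci numbers obey $a_{n+1}=3a_{n}-a_{n-1}$. Setting $Q_{n}=a_{n}^{2}+a_{n+1}^{2}+1-3a_{n}a_{n+1}$, a short computation gives
\begin{equation*}
Q_{n}-Q_{n-1}=(a_{n+1}-a_{n-1})\bigl(a_{n+1}+a_{n-1}-3a_{n}\bigr),
\end{equation*}
which vanishes because of the recurrence. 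Hence $Q_{n}$ is constant, and the base case $Q_{0}=1+4+1-6=0$ finishes the induction.

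Combining the two parts, $h^{2}+k^{2}+1-3hk=0$ holds for every symmetric pair $\{F_{2n+1},F_{2n+3}\}$, and so the reduced form of the reciprocity law yields $s(h,k)=0$. The only genuinely substantive step is the Fibonacci identity, and even that becomes routine once the telescoping trick above is spotted; the characterization of symmetric pairs stated before the theorem does all the heavy lifting, so I expect the main obstacle to be purely a matter of presenting the inductive verification cleanly rather than finding any deeper structural argument.
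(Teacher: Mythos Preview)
The paper does not supply its own proof of this statement: Theorem~\ref{th3} is quoted from Meyer~\cite{jeffrey} and used as an external input, so there is nothing in the paper to compare against directly. Your argument is correct as written. Combining the symmetry hypothesis with the reciprocity law (\ref{eq1}) reduces the claim to the identity $h^{2}+k^{2}+1=3hk$, and your telescoping verification of this for $(h,k)=(F_{2n+1},F_{2n+3})$ via the second-order recurrence $a_{n+1}=3a_{n}-a_{n-1}$ is clean and complete.

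One remark on the logical structure: you invoke the ``only if'' half of Meyer's characterization (every symmetric pair is a Fibonacci pair) to reduce to the Fibonacci case. In Meyer's original treatment that direction is itself obtained by first passing through the reciprocity law to reach the Diophantine constraint $h^{2}+k^{2}+1=3hk$ and then solving it; the vanishing $s(h,k)=0$ is then immediate from that constraint, before one ever identifies the solutions as Fibonacci numbers. So your detour through the explicit Fibonacci parametrization is not strictly necessary---once you have written $2s(h,k)=(h^{2}+k^{2}+1-3hk)/(12hk)$, the equation $h^{2}+k^{2}+1=3hk$ is exactly the content of the characterization, and the conclusion follows at once. Your inductive check is, in effect, a verification of the ``if'' direction of Meyer's classification rather than an independent ingredient.
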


In \cite{csc}, Cetin et al. defined the sum $C_{1}(h,k)$ as follows:
\begin{equation}
C_{1}(h,k)=\sum_{j=1}^{k-1}\left( \left( \frac{hj}{k}\right) \right) (-1)^{j+\left[ \frac{hj}{k}\right] } \label{basanti} 
\end{equation}
where $h,k$ are positive integers with $(h,k)=1.$

For the odd values of $k$, the below theorem is given in \cite{cetin}:
\begin{theorem}
If $(h,k)=1,$ $h$ and $k$ are odd integers with $k>0$, then we have%
\begin{equation}
C_{1}(h,k)=\frac{1}{2}-\frac{1}{2k}.  \label{aww}
\end{equation}
\end{theorem}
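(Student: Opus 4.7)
The plan is to reduce $C_{1}(h,k)$ to a sum indexed by the residues $r_{j}:=hj \bmod k$, exploiting a parity identity that trivialises the sign factor $(-1)^{j+[hj/k]}$ when both $h$ and $k$ are odd. Since $(h,k)=1$ and $1\le j\le k-1$ force $hj/k\notin\mathbb{Z}$, the sawtooth term reduces to
\begin{equation*}
\left(\left(\frac{hj}{k}\right)\right)=\frac{hj}{k}-\left[\frac{hj}{k}\right]-\frac{1}{2}=\frac{r_{j}}{k}-\frac{1}{2},
\end{equation*}
where $r_{j}=hj-k[hj/k]\in\{1,\dots,k-1\}$. Coprimality further ensures that the map $j\mapsto r_{j}$ is a bijection of $\{1,\dots,k-1\}$ onto itself, so any sum in $j$ weighted only by a function of $r_{j}$ can be re-indexed as a sum in $r$.

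The crux is establishing the parity identity $(-1)^{j+[hj/k]}=(-1)^{r_{j}}$. Starting from $[hj/k]=(hj-r_{j})/k$, since $k$ is odd one has $[hj/k]\equiv hj-r_{j}\pmod{2}$, and since $h$ is odd one has $hj\equiv j\pmod{2}$. Therefore $j+[hj/k]\equiv 2j-r_{j}\equiv r_{j}\pmod{2}$. Combining the sawtooth expansion, the sign identity, and the bijection, re-indexing by $r=r_{j}$ would yield
\begin{equation*}
C_{1}(h,k)=\sum_{r=1}^{k-1}\left(\frac{r}{k}-\frac{1}{2}\right)(-1)^{r},
\end{equation*}
an expression that, strikingly, no longer depends on $h$.

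The remaining evaluation is elementary arithmetic for odd $k$. Pairing consecutive terms gives $\sum_{r=1}^{k-1}(-1)^{r}=0$, and grouping $(-(2m-1)+2m)=1$ over $m=1,\dots,(k-1)/2$ gives $\sum_{r=1}^{k-1}r(-1)^{r}=(k-1)/2$. Dividing the latter by $k$ produces $C_{1}(h,k)=(k-1)/(2k)=1/2-1/(2k)$, as required. I expect the only delicate point to be the parity identity, since it simultaneously uses all three hypotheses ($h$ odd, $k$ odd, $(h,k)=1$); once that is verified, the proof bypasses the Hardy and Dedekind sum machinery entirely and reduces the problem to a bijection argument followed by the evaluation of two telescoping sums.
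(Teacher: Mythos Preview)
Your argument is correct. The parity computation $j+[hj/k]\equiv r_{j}\pmod 2$ (using $h$ and $k$ odd) is the key step, and once the sign collapses to $(-1)^{r_{j}}$ the bijection $j\mapsto r_{j}$ and the two elementary alternating sums finish the job cleanly. One minor remark: the parity identity itself uses only the oddness of $h$ and $k$; coprimality enters separately, first to guarantee $hj/k\notin\mathbb{Z}$ (so the sawtooth formula applies) and then to make $j\mapsto r_{j}$ a bijection.

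As for comparison: the present paper does not prove this statement but quotes it from \cite{cetin}, so there is no in-paper proof to set yours against. Given the surrounding methodology---where identities for $B_{1}$ and $C_{1}$ are typically obtained by expressing them through the Hardy sums $s_{5}(h,k)$, $S(h,k)$ and then invoking known reciprocity or vanishing results---your approach is almost certainly more elementary than the original. You work directly from the definition and a residue bijection, bypassing the Hardy--Dedekind machinery entirely; the trade-off is that your method is tailored to this specific evaluation, whereas the Hardy-sum route situates $C_{1}(h,k)$ within a broader web of identities (e.g.\ its appearance in the proof of Theorem~\ref{the7}).
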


In \cite{csc}, Cetin et al. also defined the sum $Y_{n-1}(a_{1}, a_{1},\cdots;a_{n})$ as follows:
\begin{equation*}
Y_{n-1}(a_{1}, a_{1},\cdots;a_{n})=\sum_{j=1}^{a_{n}-1}(2j-1)(-1)^{j+\left[\frac{a_{1}j}{a_{n}}\right]+\cdots+\left[\frac{a_{n-1}j}{a_{n}}\right]}\left[\frac{a_{1}j}{a_{n}}\right]\cdots\left[\frac{a_{n-1}j}{a_{n}}\right].
\end{equation*}
where $ a_{1},a_{2} ,\cdots,a_{n}$ are pairwise positive integers.

Two-term polynomial relation will has an important role in the next section.
So we remind it in the next theorem.
\begin{theorem}
If $a$, $b$, and $c$ are pairwise coprime positive integers, then
\begin{equation}
(u-1)\sum_{x=1}^{a-1}u^{x-1}v^{\left[ \frac{bx}{a}\right] }+(v-1)%
\sum_{y=1}^{b-1}v^{y-1}u^{\left[ \frac{ay}{b}\right] }=u^{a-1}v^{b-1}-1.
\label{aaa}
\end{equation}
\end{theorem}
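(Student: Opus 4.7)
The plan is to prove the two-term polynomial relation by a lattice-point partition argument, exploiting the fact that $\gcd(a,b)=1$ forces the line through $(0,0)$ and $(a,b)$ to avoid all other integer points in the rectangle. Consider the set of lattice points
\[
R=\{(x,y)\in\mathbb{Z}^{2}\ :\ 0\le x\le a-1,\ 0\le y\le b-1\},
\]
weighted by $w(x,y)=u^{x}v^{y}$. The total weight factors as a product of geometric series:
\[
\sum_{(x,y)\in R}u^{x}v^{y}=\frac{u^{a}-1}{u-1}\cdot\frac{v^{b}-1}{v-1}.
\]

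Next I would partition $R$ by the diagonal line $ay=bx$. Define
\[
L=\{(x,y)\in R\ :\ x\ge 1,\ 0\le y\le [bx/a]\},\qquad U=\{(x,y)\in R\ :\ y\ge 1,\ 0\le x\le [ay/b]\},
\]
so that $R=L\sqcup U\sqcup\{(0,0)\}$ disjointly. The coprimality assumption guarantees that no point $(x,y)$ with $1\le x\le a-1$ and $1\le y\le b-1$ lies on the diagonal, and a quick check on the axes confirms the disjointness. Summing $w$ over $L$ column by column and over $U$ row by row, and evaluating the inner geometric series, yields
\[
(v-1)\sum_{L}w=uv\sum_{x=1}^{a-1}u^{x-1}v^{[bx/a]}-\frac{u^{a}-u}{u-1},
\]
\[
(u-1)\sum_{U}w=uv\sum_{y=1}^{b-1}v^{y-1}u^{[ay/b]}-\frac{v^{b}-v}{v-1}.
\]

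I would then multiply the first identity by $(u-1)$, the second by $(v-1)$, add them to $(u-1)(v-1)$ times the contribution of $(0,0)$, and match the result with $(u^{a}-1)(v^{b}-1)$. After cancellation of the boundary terms $u^{a}$, $v^{b}$, $uv$, $u$, $v$ and $1$, the equation collapses to
\[
uv(u^{a-1}v^{b-1}-1)=uv\bigl[(u-1)\textstyle\sum_{x=1}^{a-1}u^{x-1}v^{[bx/a]}+(v-1)\sum_{y=1}^{b-1}v^{y-1}u^{[ay/b]}\bigr],
\]
from which \eqref{aaa} follows after dividing by $uv$.

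The main obstacle is bookkeeping: one has to verify that the disjoint-union decomposition is exhaustive, that the shift $v^{[bx/a]}\mapsto v^{[bx/a]+1}$ produced by summing geometric series is correctly undone by the factor of $v$, and that every leftover monomial on the right-hand side of the geometric-series evaluation cancels when the two sides are combined. A sanity check on a small case such as $(a,b)=(2,3)$, where the identity reduces to $(u-1)v+(v-1)(1+uv)=uv^{2}-1$, is quick and confirms the arithmetic before one commits to the general manipulation.
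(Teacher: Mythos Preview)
Your argument is correct. The partition $R=L\sqcup U\sqcup\{(0,0)\}$ is exactly right once $\gcd(a,b)=1$ rules out interior points on the diagonal, the two geometric-series evaluations are accurate, and the final cancellation
\[
u^{a}v^{b}-u^{a}-v^{b}+1+(u^{a}-u)+(v^{b}-v)-(u-1)(v-1)=uv\bigl(u^{a-1}v^{b-1}-1\bigr)
\]
does indeed collapse as you claim. The sanity check at $(a,b)=(2,3)$ also goes through.

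As for comparison: the paper does not supply its own proof of this identity. It simply records the two-term relation and attributes it to Berndt and Dieter \cite{Berndt2}, using it later as a black box (differentiating in $u$ and specialising $u=v=-1$) to prove Theorem~\ref{the1}. Your lattice-point counting argument is essentially the geometric proof popularised by Beck \cite{Beck}; it is self-contained and more transparent than the Riemann--Stieltjes approach in the original Berndt--Dieter paper, at the cost of being specific to the two-variable case. One small remark: the hypothesis in the stated theorem mentions a third integer $c$, a leftover from the three-term setting; your proof correctly uses only $\gcd(a,b)=1$, which is all that is needed here.
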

Equation (\ref{aaa}) is originally due to Berndt and Dieter \cite{Berndt2}.

Next corollary, which was given in \cite[Corollary 7]{cetin}, will be useful
in the next section.

\begin{corollary}
Let $h$ and $\ k$ be positive integers and $\left\{ h,k\right\} $ is a
symmetric pair. If $(h,k)=1$, $h=F_{6n-1}$ and $k=F_{6n+1}$ with $n$ is a
natural number, where $F_{m}$ is the $m-$th Fibonacci number, then%
\begin{equation}
s_{5}(h,k)+s_{5}(k,h)=\frac{1}{2}\left( \frac{h}{k}+\frac{k}{h}-2\right) ,
\label{kuch}
\end{equation}
and
\begin{equation*}
hY(h,k)+kY(k,h)=2h^{2}+2k^{2}-4hk.
\end{equation*}
\end{corollary}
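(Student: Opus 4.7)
The plan is to reduce the corollary to the reciprocity law \eqref{eq5} for $s_{5}$ (Theorem \ref{th1}) combined with the Cassini-type Fibonacci identity $F_{m-1}F_{m+1}-F_{m}^{2}=(-1)^{m}$.

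First, I would verify that the hypotheses of \eqref{eq5} are met. Recall that $F_{m}$ is even if and only if $3\mid m$; since neither $6n-1$ nor $6n+1$ is divisible by $3$, both $h=F_{6n-1}$ and $k=F_{6n+1}$ are odd. Moreover $\gcd(F_{6n-1},F_{6n+1})=F_{\gcd(6n-1,6n+1)}=F_{1}=1$, confirming coprimality (which is already assumed in the statement). Thus Theorem \ref{th1} applies and gives
\begin{equation*}
s_{5}(h,k)+s_{5}(k,h)=\frac{1}{2}-\frac{1}{2hk}.
\end{equation*}

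Next, I would rewrite the right-hand side in the form claimed. Applying the Cassini identity with $m=6n$, namely $F_{6n-1}F_{6n+1}-F_{6n}^{2}=(-1)^{6n}=1$, one obtains
\begin{equation*}
hk-1=F_{6n}^{2}=(F_{6n+1}-F_{6n-1})^{2}=(k-h)^{2}.
\end{equation*}
Therefore
\begin{equation*}
\frac{1}{2}-\frac{1}{2hk}=\frac{hk-1}{2hk}=\frac{(k-h)^{2}}{2hk}=\frac{h^{2}-2hk+k^{2}}{2hk}=\frac{1}{2}\left(\frac{h}{k}+\frac{k}{h}-2\right),
\end{equation*}
which proves \eqref{kuch}.

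For the second identity, I would simply invoke the conversion \eqref{stay}, i.e.\ $Y(h,k)=4k\,s_{5}(h,k)$ and $Y(k,h)=4h\,s_{5}(k,h)$. Multiplying the just-established \eqref{kuch} by $4hk$ gives
\begin{equation*}
hY(h,k)+kY(k,h)=4hk\bigl(s_{5}(h,k)+s_{5}(k,h)\bigr)=2hk\left(\frac{h}{k}+\frac{k}{h}-2\right)=2h^{2}+2k^{2}-4hk,
\end{equation*}
completing the proof. There is no genuine obstacle; the only point requiring a little care is producing the right Fibonacci identity (the Cassini form with the correct index parity) to convert $hk-1$ into $(k-h)^{2}$, after which everything is elementary algebra on top of the reciprocity law for $s_{5}$.
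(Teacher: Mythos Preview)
Your proof is correct. Note, however, that the present paper does not supply its own proof of this corollary: it is quoted from \cite[Corollary 7]{cetin} and stated without argument, so there is no in-paper proof to compare against. Your derivation is self-contained and clean: you first check that $h=F_{6n-1}$ and $k=F_{6n+1}$ are both odd (since $3\nmid 6n\pm1$) and coprime, so the reciprocity law \eqref{eq5} applies; then the Cassini identity $F_{6n-1}F_{6n+1}-F_{6n}^{2}=1$ together with $F_{6n}=F_{6n+1}-F_{6n-1}=k-h$ gives $hk-1=(k-h)^{2}$, which is exactly the algebraic identity needed to rewrite $\tfrac{1}{2}-\tfrac{1}{2hk}$ as $\tfrac{1}{2}\bigl(\tfrac{h}{k}+\tfrac{k}{h}-2\bigr)$. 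The second formula then follows immediately from \eqref{stay} by multiplying through by $4hk$.

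One minor observation: the ``symmetric pair'' hypothesis and Theorem~\ref{th3} (that $s(h,k)=0$ for symmetric pairs) play no role in your argument; the specific Fibonacci indices already encode everything you need. That is perfectly fine---it just means your proof is slightly more economical than the phrasing of the corollary might suggest.
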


\section{The Sum $B_{1}(h,k)$ and Its Properties}

In \cite{csc} we defined a new sum as follows:
\begin{equation*}
B_{1}(h,k)=\sum_{j=1}^{k-1}(-1)^{j+\left[ \frac{hj}{k}\right] }\left[ \frac{%
hj}{k}\right] ,
\end{equation*}
which $(h,k)=1$ and $k>0$. The sum $B_{1}(h,k)$ has the following arithmetic
property:
\begin{equation}
B_{1}(-h,-k)=B_{1}(h,k).  \label{rangde}
\end{equation}
To show that the last equality holds true, we use the definition of the $%
\left[ .\right] $ function, and the fact that $\left( \left( -x\right)
\right) =-\left( \left( x\right) \right) $. If we also consider the equation 
\begin{equation}
(-1)^{\left[ x\right] }=2\left( \left( x\right) \right) -4\left( \left( 
\frac{x}{2}\right) \right)   \label{peela}
\end{equation}
when $x$ is not an integer, then we get the equation (\ref{rangde}). The
equation (\ref{peela}) is originally from \cite{Sitaramachandrarao}.
\\Now we will give a relation between the sums $B_{1}(h,k)$ and $S(h,k)$.
\begin{theorem}
\label{the1}If $h+k$ is odd, $k>0$, and $(h,k)=1$, then%
\begin{equation}
B_{1}(h,k)=\frac{1}{2}(1-h)S(h,k).  \label{eqa7}
\end{equation}
\end{theorem}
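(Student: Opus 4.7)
\emph{Proof plan.} The plan is to exploit the involution $j \mapsto k-j$ on $\{1,\ldots,k-1\}$ in the summation defining $B_1(h,k)$, converting the defining identity into a linear equation for $B_1(h,k)$ whose only nontrivial ingredient is precisely the Hardy sum $S(h,k)$.

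First I would record the bracket identity
\[
\left[\frac{h(k-j)}{k}\right] = h - 1 - \left[\frac{hj}{k}\right], \qquad 1 \le j \le k-1,
\]
which follows from $\frac{h(k-j)}{k} = h - \frac{hj}{k}$ together with the fact that $(h,k)=1$ forces $\frac{hj}{k} \notin \mathbb{Z}$ on the summation range, so the greatest integer function decreases by exactly one under the complementation $\theta \mapsto 1-\theta$ of the fractional part.

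Next comes the crucial sign calculation. Summing the two indices gives
\[
(k-j) + \left[\frac{h(k-j)}{k}\right] = (h+k-1) - \left(j + \left[\frac{hj}{k}\right]\right),
\]
and since $h+k-1$ is \emph{even} under the hypothesis that $h+k$ is odd, we obtain the sign invariance
\[
(-1)^{(k-j)+[h(k-j)/k]} = (-1)^{j+[hj/k]}.
\]
This is the single place in which the parity hypothesis is used, and it is the heart of the argument.

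Substituting $j \mapsto k-j$ in the defining sum for $B_1(h,k)$ and applying the two displays above yields the self-referential identity
\[
B_1(h,k) = (h-1)\sum_{j=1}^{k-1}(-1)^{j+[hj/k]} - B_1(h,k).
\]
Pulling the factor $(-1)^{1}$ out of the definition of $S(h,k)$ identifies $\sum_{j=1}^{k-1}(-1)^{j+[hj/k]}$ with $-S(h,k)$, and solving for $B_1(h,k)$ produces the claimed formula $B_1(h,k) = \tfrac{1}{2}(1-h)S(h,k)$. The only nontrivial step is the parity bookkeeping behind the sign invariance; the rest is a mechanical reading of the definitions. As a sanity check I would also verify the small case $(h,k)=(2,3)$, in which one computes $B_1(2,3) = -1$ and $\tfrac{1}{2}(1-2)S(2,3) = -1$, matching.
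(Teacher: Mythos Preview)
Your proof is correct and takes a genuinely different route from the paper. The paper proceeds by differentiating the Berndt--Dieter two-term polynomial relation (equation~(\ref{aaa})) with respect to $u$, specializing to $u=v=-1$, and then invoking three external facts: the identity~(\ref{eq4}) linking the resulting exponential sum to $s_5$ and $S$, the vanishing $s_5(k,h)=0$ for $h+k$ odd, and finally the reciprocity $S(h,k)+S(k,h)=1$ to convert $S(k,h)$ into $S(h,k)$. Your argument bypasses all of this machinery: the involution $j\mapsto k-j$ together with the parity hypothesis directly yields a linear equation in $B_1(h,k)$ with $S(h,k)$ on the other side, and nothing else is needed. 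What your approach buys is a self-contained two-line derivation from the definitions; what the paper's approach buys is a template (differentiate the polynomial identity, then specialize) that generalizes to the other relations in the section and to the higher $Y_{n-1}$ sums, at the cost of importing several auxiliary results about $s_5$ and $S$.
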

\begin{proof}
We consider the two-term relation which is given in (\ref{aaa}). If we take
the partial derivative of (\ref{aaa}) with respect to $u$, and substitute $u=v=-1,$ then we have
\begin{equation*}
\sum_{x=1}^{h-1}(-1)^{x+\left[ \frac{kx}{h}\right] }-2%
\sum_{x=1}^{h-1}x(-1)^{x+\left[ \frac{kx}{h}\right] }-2\sum_{y=1}^{k-1}\left[
\frac{hy}{k}\right] (-1)^{y+\left[ \frac{hy}{k}\right] }=(h-1)(-1)^{h+k-1}.
\end{equation*}
After some elementary calculations and by using (\ref{eq4}), then we get
\begin{equation*}
-S(h,k)-2h\left( s_{5}(k,h)-\frac{1}{2}S(k,h)\right) -2B_{1}(h,k)=h-1.
\end{equation*}
We know frrom (\ref{eqas}) that $s_{5}(h,k)=s_{5}(k,h)=0.$ If we use this
fact, then we have 
\begin{equation}
-2B_{1}(h,k)=(h-1)(1-S(k,h)).  \label{eqwest}
\end{equation}
From (\ref{qsqt}) we can write
\begin{equation}
S(h,k)=1-S(k,h).  \label{wsd}
\end{equation}
If we put (\ref{wsd}) into (\ref{eqwest}), then we have desired result.
\end{proof}
In the next theorem, we will give the relation between the sums $B_{1}(h,k)$
and the Hardy-Berndt sums $s_{5}(h,k)$:

\begin{theorem}
\label{the7}If $h$ and $k$ are relatively prime odd numbers with $k>0$, then%
\begin{equation}
B_{1}(h,k)=hs_{5}(h,k)+\frac{1}{2k}-\frac{1}{2}.  \label{rff}
\end{equation}
\end{theorem}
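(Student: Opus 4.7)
The plan is to mimic the strategy of Theorem \ref{the1}: differentiate the two-term polynomial relation (\ref{aaa}) with $a=h$, $b=k$ with respect to $u$ and then specialize to $u=v=-1$. The key difference from Theorem \ref{the1} is that now $h$ and $k$ are both odd, so the right-hand side becomes $(h-1)(-1)^{h+k-1} = 1-h$, and the parity conditions that kill $s_5$ there are replaced by the ones that kill $S$.

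After the differentiation and the substitution $u=v=-1$, I expect to arrive at an identity of the shape
\begin{equation*}
\sum_{x=1}^{h-1}(-1)^{x+[kx/h]} \;-\; 2\sum_{x=1}^{h-1}x(-1)^{x+[kx/h]} \;-\; 2B_{1}(h,k) \;=\; 1-h,
\end{equation*}
identical to the one produced in the proof of Theorem \ref{the1}, but now the parities play a complementary role. Namely, since $h$ and $k$ are both odd and coprime, equation (\ref{eq6}) gives $S(k,h)=0$, which rewrites the first sum as $-S(k,h)=0$. For the second sum, the representation (\ref{eq3}) of $s_{5}(k,h)$ applies precisely because $h$ and $k$ are odd, yielding $\sum_{x=1}^{h-1}x(-1)^{x+[kx/h]}=h\,s_{5}(k,h)$. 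Substituting these two evaluations gives
\begin{equation*}
B_{1}(h,k) \;=\; \frac{h-1}{2} \;-\; h\,s_{5}(k,h).
\end{equation*}

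The final step is to trade $s_{5}(k,h)$ for $s_{5}(h,k)$ using the reciprocity law (\ref{eq5}), which is available precisely under the hypothesis $h,k$ odd and coprime: $s_{5}(k,h) = \tfrac{1}{2} - \tfrac{1}{2hk} - s_{5}(h,k)$. Plugging this in collapses the $h/2$ terms and leaves exactly $B_{1}(h,k) = h s_{5}(h,k) + \tfrac{1}{2k} - \tfrac{1}{2}$, as claimed.

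The main obstacle is bookkeeping rather than ideas: one has to keep track of the correct parities when evaluating $(-1)^{h+k-1}$, $(-1)^{h-2}$, $(-1)^{k-1}$, etc., so that the sign in front of $s_{5}(k,h)$ and the constant term come out right. Everything else is a formal rearrangement that mirrors the proof of Theorem \ref{the1}, with (\ref{eq6}) playing the role that (\ref{eqas}) played there, and with reciprocity (\ref{eq5}) being used at the end instead of (\ref{qsqt}).
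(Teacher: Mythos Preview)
Your argument is correct, but it is not the route taken in the paper. The paper proceeds much more directly: it expands $\left[\frac{hj}{k}\right]=\frac{hj}{k}-\left(\left(\frac{hj}{k}\right)\right)-\frac{1}{2}$ inside the definition of $B_{1}(h,k)$ to obtain the decomposition
\[
B_{1}(h,k)=hs_{5}(h,k)-C_{1}(h,k)+\tfrac{1}{2}S(h,k),
\]
and then simply quotes $C_{1}(h,k)=\tfrac{1}{2}-\tfrac{1}{2k}$ from (\ref{aww}) and $S(h,k)=0$ from (\ref{eq6}). No polynomial relation and no $s_{5}$-reciprocity are used.

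Your approach instead transplants the machinery of Theorem~\ref{the1}: you differentiate the two-term relation (\ref{aaa}), evaluate at $u=v=-1$, kill the $S$-terms via (\ref{eq6}), and then trade $s_{5}(k,h)$ for $s_{5}(h,k)$ through the reciprocity law (\ref{eq5}). This is a perfectly valid alternative; it has the virtue of being uniform with the proof of Theorem~\ref{the1} and of avoiding any appeal to the auxiliary sum $C_{1}(h,k)$, at the cost of invoking $s_{5}$-reciprocity, which the paper's own proof does not need here.
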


\begin{proof}
From the definition of the sum $B_{1}(h,k)$ with basic calculations, we get%
\begin{equation*}
B_{1}(h,k)=hs_{5}(h,k)-C_{1}(h,k)+\frac{1}{2}S(h,k).
\end{equation*}%
From \cite{cetin}, we know that equation (\ref{aww}) holds true. If we also
use (\ref{eq6}), then we get the desired result.
\end{proof}
In the next theorem we will give the relation between the sums $B_{1}(h,k)$
and the sums $Y(h,k)$:

\begin{theorem}
\label{the12}If $h$ and $k$ are relatively prime odd numbers with $k>0$, then%
\begin{equation*}
B_{1}(h,k)=\frac{h}{4k}Y(h,k)+\frac{1}{2k}-\frac{1}{2}.
\end{equation*}
\end{theorem}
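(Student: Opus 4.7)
The plan is to observe that this theorem is an immediate consequence of Theorem \ref{the7} combined with relation (\ref{stay}) linking the Simsek sum $Y(h,k)$ to the Hardy sum $s_5(h,k)$, so essentially no new work is needed beyond a substitution.

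More concretely, I would first invoke Theorem \ref{the7}, whose hypotheses (namely $h$ and $k$ relatively prime odd positive integers with $k>0$) coincide exactly with the hypotheses of the statement to be proved. This gives
\begin{equation*}
B_{1}(h,k)=hs_{5}(h,k)+\frac{1}{2k}-\frac{1}{2}.
\end{equation*}
Next, I would use the identity $Y(h,k)=4ks_{5}(h,k)$ from (\ref{stay}); since $k>0$ we may solve for $s_{5}(h,k)=\frac{Y(h,k)}{4k}$ and substitute into the previous display to obtain the claimed formula.

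The only thing worth checking is that the hypotheses match up correctly: Theorem \ref{the7} requires $h,k$ odd and coprime with $k>0$, and (\ref{stay}) requires only $(h,k)=1$ and $k>0$, so both results apply under the assumptions of the present theorem. There is essentially no obstacle here; the statement is a cosmetic reformulation of Theorem \ref{the7} that expresses $B_1(h,k)$ in terms of the Simsek sum $Y(h,k)$ rather than the Hardy-Berndt sum $s_5(h,k)$. Its value lies less in difficulty than in making the $B_1$-to-$Y$ connection explicit for future reference.
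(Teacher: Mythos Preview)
Your proposal is correct and follows exactly the paper's own proof, which simply says the result is obtained directly from Theorem \ref{the7} and equation (\ref{stay}). Your write-up is in fact more detailed than the paper's one-line justification, but the approach is identical.
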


\begin{proof}
It can be directly obtain from Theorem \ref{the7} and equation (\ref{stay}).
\end{proof}

Now, we will give a relation for the sums $B_{1}(h,k)$ as follows:

\begin{theorem}
\label{the2}If $h+k$ is an odd positive integer and $(h,k)=1$, then%
\begin{equation}
(k-1)B_{1}(h,k)+(h-1)B_{1}(k,h)=-\frac{1}{2}(k-1)(h-1). \label{eqa111}
\end{equation}
\end{theorem}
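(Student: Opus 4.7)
The plan is to reduce the statement to an application of Theorem \ref{the1} (which expresses $B_1$ in terms of $S$ when $h+k$ is odd) together with the Hardy reciprocity law \eqref{qsqt} for $S(h,k)$.

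First I would invoke Theorem \ref{the1}. Since $h+k$ is odd and $(h,k)=1$, the hypothesis is symmetric in $h$ and $k$, so the theorem gives both
\begin{equation*}
B_{1}(h,k)=\tfrac{1}{2}(1-h)S(h,k),\qquad B_{1}(k,h)=\tfrac{1}{2}(1-k)S(k,h).
\end{equation*}
Multiplying these by $(k-1)$ and $(h-1)$ respectively and adding, I would get
\begin{equation*}
(k-1)B_{1}(h,k)+(h-1)B_{1}(k,h)=-\tfrac{1}{2}(h-1)(k-1)\bigl(S(h,k)+S(k,h)\bigr),
\end{equation*}
using $(1-h)(k-1)=-(h-1)(k-1)$ and $(1-k)(h-1)=-(h-1)(k-1)$ to pull out the common factor.

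The final step is to apply the Hardy reciprocity formula \eqref{qsqt}, which under the hypothesis that $h+k$ is odd and $(h,k)=1$ gives $S(h,k)+S(k,h)=1$. Substituting this into the bracket yields the desired identity
\begin{equation*}
(k-1)B_{1}(h,k)+(h-1)B_{1}(k,h)=-\tfrac{1}{2}(h-1)(k-1).
\end{equation*}

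There is no real obstacle here: since Theorem \ref{the1} already does the heavy lifting by linking $B_1$ to $S$, the present theorem is essentially a symmetrized restatement of the Hardy reciprocity law \eqref{qsqt}. The only care needed is keeping track of signs when factoring $-(h-1)(k-1)$ out of the two summands before applying reciprocity.
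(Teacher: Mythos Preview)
Your proof is correct and uses the same two ingredients as the paper, namely Theorem~\ref{the1} and the Hardy reciprocity law \eqref{qsqt}. However, your route is considerably more direct. The paper first multiplies \eqref{eqa7} and its companion by $k$ and $h$ (rather than by $k-1$ and $h-1$) to obtain the auxiliary identity \eqref{eqa10}, then separately computes $B_{1}(h,k)+B_{1}(k,h)$ to get \eqref{eqa11}, and finally combines these with further applications of \eqref{qsqt} to reach the result. By choosing the multipliers $(k-1)$ and $(h-1)$ from the outset you make the common factor $-(h-1)(k-1)$ appear immediately, so a single application of \eqref{qsqt} finishes the argument. Your version is a genuine simplification; the paper's detour through \eqref{eqa10} and \eqref{eqa11} buys nothing extra here, though those intermediate identities may be of independent interest.
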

\begin{proof}
From Theorem \ref{the1}, we showed that equation (\ref{eqa7}) holds.
Similarly, when $h+k$ is an odd positive integer with $h>0,$ we can also
write%
\begin{equation}
B_{1}(k,h)=\frac{1}{2}(1-k)S(h,k).  \label{eqa8}
\end{equation}%
So first, if we multiply (\ref{eqa7}) by $k$ and (\ref{eqa8}) by $h$
respectively, then if we sum the two equations side by side and use (\ref%
{qsqt}), we get the following identity: 
\begin{equation}
kB_{1}(h,k)+hB_{1}(k,h)=\frac{1}{2}\left[ kS(h,k)+hS(k,h)-hk\right] .
\label{eqa10}
\end{equation}%
Now we will consider the sum $B_{1}(h,k)+B_{1}(k,h)$. From (\ref{eqa7}) and (%
\ref{eqa8}), we can see that%
\begin{equation*}
B_{1}(h,k)+B_{1}(k,h)=\frac{1}{2}\left[ 1-\left( hS(h,k)+kS(k,h)\right) %
\right] .
\end{equation*}%
So from this last equation, we can write%
\begin{equation}
hS(h,k)+kS(k,h)=1-2B_{1}(h,k)-2B_{1}(k,h).  \label{eqa11}
\end{equation}%
Now we will use the equation (\ref{qsqt}). First, we multiply (\ref{qsqt})
by $h$, and we multiply (\ref{qsqt}) by $k$. Then if add these two equations
side by side and if we use the (\ref{eqa10}), we get the desired result.
\end{proof}
In the below theorem, we will give the reciprocity theorem for the sums $%
B_{1}(h,k)$:

\begin{theorem}
\label{the8}If $h$ and $k$ are odd positive integers with $(h,k)=1$, then%
\begin{equation*}
kB_{1}(h,k)+hB_{1}(k,h)=\frac{1}{2}(h-1)(k-1).
\end{equation*}
\end{theorem}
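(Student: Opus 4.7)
The plan is to reduce everything to the reciprocity law for the Hardy--Berndt sum $s_{5}(h,k)$, using the representation of $B_{1}(h,k)$ in terms of $s_{5}(h,k)$ proved earlier in Theorem~\ref{the7}. Since $h$ and $k$ are both odd and coprime, Theorem~\ref{the7} is applicable symmetrically, which is exactly what the symmetric left-hand side of the desired identity calls for.

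First I would apply Theorem~\ref{the7} to both $B_{1}(h,k)$ and $B_{1}(k,h)$, obtaining
\begin{equation*}
B_{1}(h,k)=hs_{5}(h,k)+\frac{1}{2k}-\frac{1}{2},\qquad B_{1}(k,h)=ks_{5}(k,h)+\frac{1}{2h}-\frac{1}{2}.
\end{equation*}
Then I would multiply the first identity by $k$ and the second by $h$, and add them side by side. The constant terms collapse neatly: the $\frac{1}{2k}$ and $\frac{1}{2h}$ factors produce two $\frac{1}{2}$'s, while the $-\frac{1}{2}$ terms yield $-\tfrac{k}{2}-\tfrac{h}{2}$, giving
\begin{equation*}
kB_{1}(h,k)+hB_{1}(k,h)=hk\bigl(s_{5}(h,k)+s_{5}(k,h)\bigr)+1-\frac{h+k}{2}.
\end{equation*}

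The last ingredient is the reciprocity law \eqref{eq5} from Theorem~\ref{th1}, valid precisely when $h$ and $k$ are coprime and both odd, which gives $s_{5}(h,k)+s_{5}(k,h)=\tfrac{1}{2}-\tfrac{1}{2hk}$. Substituting this in, the factor $hk$ turns the right-hand side into $\tfrac{hk}{2}-\tfrac{1}{2}$, and the remaining algebraic simplification
\begin{equation*}
\frac{hk}{2}-\frac{1}{2}+1-\frac{h+k}{2}=\frac{hk-h-k+1}{2}=\frac{(h-1)(k-1)}{2}
\end{equation*}
delivers the claim. There is no genuine obstacle here: once Theorem~\ref{the7} is in hand, the result is essentially a repackaging of the reciprocity law for $s_{5}$, and the only care needed is to verify that the parity hypotheses (both $h$ and $k$ odd) match those required by Theorems~\ref{the7} and~\ref{th1}, which they do.
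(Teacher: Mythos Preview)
Your proof is correct and follows essentially the same route as the paper: apply Theorem~\ref{the7} symmetrically, form the combination $kB_{1}(h,k)+hB_{1}(k,h)$, and then invoke the reciprocity law \eqref{eq5} for $s_{5}$. The only difference is cosmetic---you carry out the final algebraic simplification $\tfrac{hk}{2}-\tfrac{1}{2}+1-\tfrac{h+k}{2}=\tfrac{(h-1)(k-1)}{2}$ explicitly, whereas the paper leaves it to the reader.
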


\begin{proof}
From Theorem \ref{the7}, we know that equation (\ref{rff}) holds. Similarly,
we can also write%
\begin{equation}
B_{1}(k,h)=ks_{5}(k,h)+\frac{1}{2h}-\frac{1}{2}.  \label{pfff}
\end{equation}%
If we multiply equation (\ref{rff}) by $k$, and equation (\ref{pfff}) by $h$
respectively, and add these equations side by side, we get%
\begin{equation}
kB_{1}(h,k)+hB_{1}(k,h)=hk\left( s_{5}(h,k)+s_{5}(k,h)\right) +1-\frac{k}{2}-%
\frac{h}{2}.  \label{sanam}
\end{equation}%
In this last equation if we use (\ref{eq5}), then we get the desired result.
\end{proof}

 \begin{theorem}
 If $h+k$ is an odd positive integer and $(h,k)=1$, then%
\begin{equation*}
Y_{1}(h,k)+Y_{1}(k,h)=2(k-1)B_{1}(h,k)+2(h-1)B_{1}(k,h).
\end{equation*}
 \end{theorem}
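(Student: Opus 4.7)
The plan is to establish the intermediate identity
\[
Y_{1}(h,k)+Y_{1}(k,h) = -(h-1)(k-1) \quad \text{when } h+k \text{ is odd},
\]
from which the stated result follows immediately by appealing to Theorem~\ref{the2}, since that theorem gives $2(k-1)B_{1}(h,k)+2(h-1)B_{1}(k,h) = -(k-1)(h-1)$.

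To obtain the intermediate identity I would apply the mixed second-order operator $\partial^{2}/\partial u\,\partial v$ to both sides of the two-term polynomial relation (\ref{aaa}) with $a=h$ and $b=k$, and then specialize at $u=v=-1$. Writing the left-hand side as $A+B$ with
\[
A=(u-1)\sum_{x=1}^{h-1}u^{x-1}v^{[kx/h]},\qquad B=(v-1)\sum_{y=1}^{k-1}v^{y-1}u^{[hy/k]},
\]
differentiating $A$ produces two contributions: one from differentiating the factor $(u-1)$ and one from differentiating $u^{x-1}$. After substituting $u=v=-1$, using $(-1)^{x-1+[kx/h]-1}=(-1)^{x+[kx/h]}$, collecting the factor $(1+2(x-1))=2x-1$, and noting that the $(x=1)$-term in the second contribution vanishes, these two contributions combine cleanly into
\[
\partial_{u}\partial_{v}A\big|_{u=v=-1}=\sum_{x=1}^{h-1}(2x-1)(-1)^{x+[kx/h]}\left[\frac{kx}{h}\right]=Y_{1}(k,h).
\]
By the symmetry of the construction, the analogous evaluation for $B$ gives $Y_{1}(h,k)$.

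For the right-hand side,
\[
\partial_{u}\partial_{v}(u^{h-1}v^{k-1}-1)\big|_{u=v=-1}=(h-1)(k-1)(-1)^{h+k-4}=-(h-1)(k-1),
\]
where the last equality uses that $h+k$ is odd. Equating the two sides delivers the intermediate identity $Y_{1}(h,k)+Y_{1}(k,h)=-(h-1)(k-1)$, and the theorem follows from Theorem~\ref{the2}.

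The main obstacle is bookkeeping rather than a conceptual difficulty: the factor $(u-1)$ (resp.\ $(v-1)$) vanishes at $u=-1$ (resp.\ $v=-1$), so only the cross-term from differentiating that factor contributes, and one must be careful with the sign arising from $(-1)^{x-2}$ and $(-1)^{[kx/h]-1}$. The algebraic identity $1+2(x-1)=2x-1$ is exactly what is needed to recognize the $Y_{1}$ sum; if one differentiated in a different order, or applied a single rather than a mixed derivative, the combinatorial weight $(2x-1)$ would not appear, so choosing the mixed derivative $\partial_{u}\partial_{v}$ is the crucial ingredient.
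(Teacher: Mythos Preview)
Your proof is correct and follows the same overall strategy as the paper: both obtain the intermediate identity $Y_{1}(h,k)+Y_{1}(k,h)=-(h-1)(k-1)$ for $h+k$ odd and then invoke Theorem~\ref{the2} (equation~(\ref{eqa111})). The only difference is that the paper imports this intermediate identity from the $n=2$ case of Theorem~4 in \cite{csc}, whereas you re-derive it in a self-contained way by applying the mixed derivative $\partial_{u}\partial_{v}$ to the Berndt--Dieter two-term relation~(\ref{aaa}) and evaluating at $u=v=-1$; your bookkeeping of the signs and of the vanishing boundary terms (the $x=1$ contribution and the $[kx/h]=0$ terms) is correct.
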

\begin{proof}
In \cite{csc}, if we take $ n=2 $ in Theorem 4, and use it with equation (\ref{eqa111}) we have desired result.
\end{proof}

\begin{theorem}
 If $h+k$ is an odd positive integer and $(h,k)=1$, then%
\begin{equation}
Y_{1}(h,k)+Y_{1}(k,h)=2kB_{1}(h,k)+2hB_{1}(k,h).  \label{eqa112}
\end{equation}
 \end{theorem}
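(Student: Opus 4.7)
The plan is to mirror the strategy of the preceding theorem, replacing the use of equation (\ref{eqa111}) with equation (\ref{eqa10}) at the crucial substitution step.

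First, I would invoke Theorem 4 of \cite{csc} specialised to $n=2$, exactly as in the proof of the preceding theorem. Under the hypothesis that $(h,k)=1$ and $h+k$ is odd, this yields a closed-form evaluation of the reciprocity sum $Y_{1}(h,k)+Y_{1}(k,h)$, expressed naturally in terms of the Hardy sums $S(h,k)$, $S(k,h)$ and the product $hk$.

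Next, I would recall the intermediate identity (\ref{eqa10}) that was established inside the proof of Theorem \ref{the2}, namely
\[
kB_{1}(h,k)+hB_{1}(k,h)=\tfrac{1}{2}\bigl[kS(h,k)+hS(k,h)-hk\bigr].
\]
Doubling both sides gives
\[
2kB_{1}(h,k)+2hB_{1}(k,h)=kS(h,k)+hS(k,h)-hk,
\]
which is already an expression for the right-hand side of (\ref{eqa112}) in $S$-terms. The conclusion follows by substituting the closed form from Theorem 4 of \cite{csc} into the left-hand side of this equality, or equivalently by observing that the two expressions represent the same combination of $S(h,k)$, $S(k,h)$ and $hk$.

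The main obstacle will be purely bookkeeping: one must carefully unpack the right-hand side of Theorem 4 of \cite{csc} at $n=2$ (which the present paper only cites rather than restates) and verify that it is precisely $kS(h,k)+hS(k,h)-hk$. Once this matching is confirmed, no further estimates or manipulations are needed — the identity (\ref{eqa112}) follows by direct substitution, in complete analogy with how the preceding theorem was obtained by pairing the same $n=2$ reciprocity with equation (\ref{eqa111}).
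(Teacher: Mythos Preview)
Your proposal follows essentially the same template as the paper: invoke Theorem~4 of \cite{csc} at $n=2$ and combine it with an auxiliary identity from the present paper. The paper's printed proof actually cites equation~(\ref{eqa112}) itself as the auxiliary input, which is circular and evidently a typo; your choice of equation~(\ref{eqa10}) is the natural correction, in exact parallel to how the preceding theorem pairs the same $n=2$ instance with~(\ref{eqa111}). One small caveat: if Theorem~4 of \cite{csc} at $n=2$ delivers $Y_1(h,k)+Y_1(k,h)$ as the closed polynomial $-(h-1)(k-1)$ rather than directly in $S$-terms, then matching it against $kS(h,k)+hS(k,h)-hk$ requires the extra identity $kS(h,k)+hS(k,h)=h+k-1$, which is not~(\ref{qsqt}) alone but follows readily from~(\ref{eqa7}),~(\ref{eqa8}) and~(\ref{qsqt}); you should be prepared to insert that short computation.
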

\begin{proof}
In \cite{csc}, if we take $ n=2 $ in Theorem 4, and use it with equation (\ref{eqa112}) we have desired result.
\end{proof}

In the following three theorems, we will give the relations between the sums 
$B_{1}(h,k)$ and the Dedekind sums $s(h,k)$:

\begin{theorem}
\label{the3}Let $h+k$ is odd, $(h,k)=1$ with $k>0$. Then%
\begin{equation*}
B_{1}(h,k)=(1-h)\left( 4s(h,2k)+4s(2h,k)-10s(h,k)\right) .
\end{equation*}
\end{theorem}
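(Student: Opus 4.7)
The plan is to combine two results that are already available in the excerpt: Theorem \ref{the1}, which expresses $B_1(h,k)$ in terms of the Hardy sum $S(h,k)$ when $h+k$ is odd, and equation (\ref{ef1}), which expresses $S(h,k)$ in terms of Dedekind sums under the same parity hypothesis.

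More precisely, I would first invoke Theorem \ref{the1} to write
\begin{equation*}
B_{1}(h,k)=\tfrac{1}{2}(1-h)S(h,k),
\end{equation*}
which is valid because $h+k$ is odd and $(h,k)=1$. Then, since the same parity hypothesis $h+k$ odd also makes (\ref{ef1}) applicable, I would substitute
\begin{equation*}
S(h,k)=8s(h,2k)+8s(2h,k)-20s(h,k).
\end{equation*}
Multiplying this expression by $\frac{1}{2}(1-h)$ distributes the factor of $\tfrac{1}{2}$ across each of the three Dedekind-sum terms, producing coefficients $4$, $4$, and $-10$ respectively, which matches the stated identity.

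Because both ingredients are already established in the paper, the proof is essentially a one-line substitution with no real obstacle: the only thing to verify is that the parity and coprimality hypotheses of the current theorem are exactly those needed for both Theorem \ref{the1} and equation (\ref{ef1}), which they are. No appeal to reciprocity, to the two-term polynomial relation, or to Fibonacci-number machinery is required at this stage.
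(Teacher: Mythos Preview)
Your proposal is correct and follows exactly the same approach as the paper, which simply states that the result is obtained directly from Theorem \ref{the1} and equation (\ref{ef1}). Your additional remarks verifying that the parity and coprimality hypotheses match are appropriate and make the substitution explicit.
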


\begin{proof}
It can be directly obtain from Theorem \ref{the1} and equation (\ref{ef1}).
\end{proof}

\begin{theorem}
\label{the4}Let $h+k$ is odd, $(h,k)=1$ with $k>0$. Then%
\begin{equation*}
B_{1}(h,k)=2(1-h)\left( s(h,k)-2s(h+k,2k)\right) .
\end{equation*}
\end{theorem}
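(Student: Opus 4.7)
The plan is to obtain this identity as an immediate consequence of the two results already in hand: Theorem \ref{the1}, which expresses $B_{1}(h,k)$ in terms of the Hardy sum $S(h,k)$ under exactly the hypothesis $h+k$ odd and $(h,k)=1$, and equation (\ref{ewss}), which expresses $S(h,k)$ in terms of the classical Dedekind sums $s(h,k)$ and $s(h+k,2k)$ under the same hypothesis. Both ingredients require precisely the hypotheses of the theorem, so no extra case analysis is needed.

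First I would invoke Theorem \ref{the1} to write
\begin{equation*}
B_{1}(h,k)=\tfrac{1}{2}(1-h)S(h,k).
\end{equation*}
Next I would substitute the Berndt--Sitaramachandrarao representation (\ref{ewss}),
\begin{equation*}
S(h,k)=4s(h,k)-8s(h+k,2k),
\end{equation*}
into the right-hand side. Factoring the common $2$ from the bracket yields
\begin{equation*}
B_{1}(h,k)=\tfrac{1}{2}(1-h)\bigl(4s(h,k)-8s(h+k,2k)\bigr)=2(1-h)\bigl(s(h,k)-2s(h+k,2k)\bigr),
\end{equation*}
which is the claimed formula. This is exactly the same strategy used to deduce Theorem \ref{the3} from Theorem \ref{the1} and equation (\ref{ef1}); only the choice of representation for $S(h,k)$ differs.

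There is no real obstacle here, since both inputs are already established and their hypotheses match verbatim. The only thing worth double-checking is the compatibility of domains: Theorem \ref{the1} and equation (\ref{ewss}) both assume $h+k$ odd, $(h,k)=1$, $k>0$, which are precisely the assumptions of the present theorem, so the substitution is legitimate and no additional parity split (like the odd-odd case treated in Theorem \ref{the8}) enters the argument.
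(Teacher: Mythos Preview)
Your proof is correct and is exactly the approach the paper uses: the paper's own proof simply states that the result follows directly from Theorem~\ref{the1} together with equation~(\ref{ewss}), which is precisely the substitution you carry out.
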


\begin{proof}
It can be directly obtain from Theorem \ref{the1} and equation (\ref{ewss}).
\end{proof}
\begin{theorem}
\label{the9}If $h$ and $k$ are relatively prime odd numbers with $k>0,$ then%
\begin{equation*}
B_{1}(h,k)=-10hs(h,k)+4hs(2h,k)+4hs(h,2k)+\frac{1}{2k}-\frac{1}{2}.
\end{equation*}
\end{theorem}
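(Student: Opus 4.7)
The plan is to combine Theorem \ref{the7} with the Dedekind-sum expression for $s_5(h,k)$ given in equation (\ref{ef6}), exactly parallel to how Theorems \ref{the3} and \ref{the4} were obtained from Theorem \ref{the1}. Since $h$ and $k$ are both odd, their sum $h+k$ is even, so the hypothesis of (\ref{ef6}) is satisfied and we may write
\begin{equation*}
s_{5}(h,k)=-10s(h,k)+4s(2h,k)+4s(h,2k).
\end{equation*}

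Next I would multiply this identity through by $h$ to obtain $h s_{5}(h,k) = -10h s(h,k) + 4h s(2h,k) + 4h s(h,2k)$. Then I invoke Theorem \ref{the7}, which applies precisely in the odd-odd coprime case and reads $B_{1}(h,k) = h s_{5}(h,k) + \frac{1}{2k} - \frac{1}{2}$. Substituting the expression just derived for $h s_{5}(h,k)$ into this formula yields the claimed identity.

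There is no genuine obstacle here; the argument is a one-line substitution, and the only thing one needs to verify is that the parity hypotheses align. Theorem \ref{the7} requires $h,k$ odd and coprime, while (\ref{ef6}) requires $h+k$ even (together with $(h,k)=1$); both are immediate from the hypotheses of the present theorem, so the two results may be composed without further restriction.
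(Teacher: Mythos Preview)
Your proposal is correct and follows exactly the same approach as the paper, which simply states that the result can be directly obtained from equation (\ref{rff}) and equation (\ref{ef6}). Your additional check that the parity hypotheses of the two ingredients align is a welcome clarification.
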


\begin{proof}
It can be directly obtain from equation (\ref{rff}) and equation (\ref{ef6}).
\end{proof}

Now we will give two different infinite series representations of the sums $%
B_{1}(h,k)$:

\begin{theorem}
\label{the5}Let $h$ and $k$ denote relatively prime integers with $k>0$. If $%
h+k$ is odd, then%
\begin{equation*}
B_{1}(h,k)=\frac{2(1-h)}{\pi }\sum_{n=1}^{\infty }\frac{1}{2n-1}\tan \left( 
\frac{\pi h(2n-1)}{2k}\right) .
\end{equation*}
\end{theorem}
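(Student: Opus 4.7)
The plan is to observe that Theorem~\ref{the5} is a direct consequence of Theorem~\ref{the1} together with the infinite series representation for the Hardy sum $S(h,k)$ recorded in equation~(\ref{ew1}). Both results are available under exactly the hypotheses of Theorem~\ref{the5}, namely $(h,k)=1$, $k>0$, and $h+k$ odd, so no extra case analysis is needed.

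The first step is to invoke Theorem~\ref{the1}, which gives
\begin{equation*}
B_1(h,k) \;=\; \tfrac{1}{2}(1-h)\,S(h,k).
\end{equation*}
The second step is to apply equation~(\ref{ew1}) to replace $S(h,k)$ by its infinite series in tangents:
\begin{equation*}
S(h,k) \;=\; \frac{4}{\pi}\sum_{n=1}^{\infty}\frac{1}{2n-1}\tan\!\left(\frac{\pi h(2n-1)}{2k}\right).
\end{equation*}
Substituting this into the previous identity and simplifying the constant $\tfrac{1}{2}(1-h)\cdot\tfrac{4}{\pi}=\tfrac{2(1-h)}{\pi}$ produces the claimed formula.

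There is essentially no main obstacle: the proof is a one-line composition of two results already established in the paper. The only point worth remarking is that both Theorem~\ref{the1} and equation~(\ref{ew1}) require precisely the parity condition that $h+k$ be odd, so the hypothesis of Theorem~\ref{the5} matches the joint hypothesis of the two ingredients without needing to invoke the complementary identity in Theorem~\ref{the7}. Thus the write-up can be given in essentially two displayed equations with a sentence of justification between them.
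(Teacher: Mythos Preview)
Your proposal is correct and follows exactly the same approach as the paper: the paper's own proof simply states that the result is obtained directly from Theorem~\ref{the1} and equation~(\ref{ew1}). Your write-up spells this out in slightly more detail, but the argument is identical.
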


\begin{proof}
It can be directly obtain from Theorem \ref{the1} and equation (\ref{ew1}).
\end{proof}

\begin{theorem}
\label{the10}Let $h$ and $k$ denote relatively prime integers with $k>0$. If 
$h$ and $k$ are odd, then%
\begin{equation*}
B_{1}(h,k)=\frac{2h}{\pi }\sum_{\substack{ n=1  \\ 2n-1\not \equiv \text{mod}k}}%
^{\infty }\frac{1}{2n-1}\tan \left( \frac{\pi h(2n-1)}{2k}\right) +\frac{1}{2k}-\frac{1}{2}.
\end{equation*}
\end{theorem}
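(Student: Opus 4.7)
The plan is to mimic the same one-line substitution strategy that succeeded in the four preceding corollary-style theorems (Theorems \ref{the3}, \ref{the4}, \ref{the9}, and \ref{the5}): pick the identity expressing $B_{1}(h,k)$ as an affine function of a Hardy--Berndt sum that has already been given an infinite series representation, and plug in the series.

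Concretely, since here $h$ and $k$ are both odd with $(h,k)=1$, the natural starting point is Theorem \ref{the7}, which gives
\begin{equation*}
B_{1}(h,k)=h\,s_{5}(h,k)+\frac{1}{2k}-\frac{1}{2}.
\end{equation*}
The hypothesis ``$h$ and $k$ odd'' is exactly the hypothesis under which equation (\ref{ew6}) provides the infinite series representation
\begin{equation*}
s_{5}(h,k)=\frac{2}{\pi}\sum_{\substack{n=1\\2n-1\not\equiv 0(\bmod\ k)}}^{\infty}\frac{1}{2n-1}\tan\!\left(\frac{\pi h(2n-1)}{2k}\right).
\end{equation*}
So the plan is simply to multiply this series by $h$ and substitute into the formula from Theorem \ref{the7}; the desired identity pops out directly.

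There is no real obstacle: the only thing to be careful about is that the hypotheses of Theorem \ref{the7} and of (\ref{ew6}) both require $h,k$ odd and coprime, which is precisely what the statement assumes, so no additional verification is needed. In sum, the proof is a one-line derivation: \emph{it follows from Theorem \ref{the7} and equation (\ref{ew6})}.
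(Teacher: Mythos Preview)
Your proposal is correct and matches the paper's own proof exactly: the paper simply states that the result follows directly from Theorem \ref{the7} and equation (\ref{ew6}).
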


\begin{proof}
It can be directly obtain from Theorem \ref{the7} and equation (\ref{ew6}).
\end{proof}
Similarly we give two different finite series representations of the sums $B_{1}(h,k)$ in the below:
\begin{theorem}
\label{the6}Let $h$ and $k$ denote relatively prime integers with $k>0$. If $h+k$ is odd, then
\begin{equation*}
B_{1}(h,k)=\frac{1-h}{2k}\sum_{j=1}^{k}\tan \left( \frac{\pi h(2j-1)}{2k}%
\right) \cot \left( \frac{\pi (2j-1)}{2k}\right) .
\end{equation*}
\end{theorem}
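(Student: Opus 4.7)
The plan is to reduce this to two ingredients that have already been established earlier in the paper. First, Theorem \ref{the1} tells us that under the hypothesis $h+k$ odd and $(h,k)=1$ with $k>0$, we have the clean identity
\[
B_{1}(h,k)=\tfrac{1}{2}(1-h)S(h,k).
\]
Second, the finite trigonometric representation of $S(h,k)$ recorded in equation (\ref{ress}) gives, under the same parity hypothesis,
\[
S(h,k)=\frac{1}{k}\sum_{j=1}^{k}\tan\!\left(\frac{\pi h(2j-1)}{2k}\right)\cot\!\left(\frac{\pi(2j-1)}{2k}\right).
\]

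The proof therefore consists of substituting the right-hand side of (\ref{ress}) for $S(h,k)$ in the formula from Theorem \ref{the1} and simplifying the coefficient $\tfrac{1}{2}(1-h)\cdot\tfrac{1}{k}=\tfrac{1-h}{2k}$. This pattern is exactly the one already used in Theorem \ref{the5} for the infinite-series representation, and the present theorem is the finite-series analogue obtained by the same chain of implications.

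There is no real obstacle here: the two cited results have matching hypotheses ($h+k$ odd and $(h,k)=1$), which are precisely the hypotheses of the theorem being proved, so no case analysis or auxiliary lemma is required. The only thing one should double-check is that the index range $j=1,\ldots,k$ in (\ref{ress}) is carried over verbatim — in particular that no term of the sum is singular, which is guaranteed because $h+k$ odd forces $h$ to have the opposite parity to $k$, so $\tan(\pi h(2j-1)/(2k))$ is always finite while $\cot(\pi(2j-1)/(2k))$ is finite for $1\le j\le k$. With this verified, the identity follows in one line.
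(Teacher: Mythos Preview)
Your proposal is correct and matches the paper's own proof exactly: the paper states only that the result ``can be directly obtain from Theorem \ref{the1} and equation (\ref{ress}),'' which is precisely the substitution you describe. Your additional remark verifying that the trigonometric terms are finite is a nice sanity check, though it is not strictly needed since (\ref{ress}) is already an established identity under the same hypotheses.
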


\begin{proof}
It can be directly obtain from Theorem \ref{the1} and equation (\ref{ress}).
\end{proof}

\begin{theorem}
\label{the11}Let $h$ and $k$ be coprime integers with $k>0$. If $h$ and $k$
are odd, then
\begin{equation*}
B_{1}(h,k)=\frac{h}{2k}\sum_{\substack{ j=1  \\ j\neq \frac{k+1}{2}}}^{k}\tan
\left( \frac{\pi h(2j-1)}{2k}\right) \cot \left( \frac{\pi (2j-1)}{2k}%
\right) +\frac{1}{2k}-\frac{1}{2}.
\end{equation*}
\end{theorem}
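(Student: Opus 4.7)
The plan is to reduce this to a one-line substitution, following the same pattern used in Theorems \ref{the6}, \ref{the10} and the earlier corollaries. Observe that when both $h$ and $k$ are odd and coprime, Theorem \ref{the7} already gives us
\begin{equation*}
B_{1}(h,k)=hs_{5}(h,k)+\frac{1}{2k}-\frac{1}{2},
\end{equation*}
so the only remaining task is to replace $s_{5}(h,k)$ by a finite trigonometric sum.

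For that replacement I would invoke the finite series representation of $s_{5}(h,k)$ recorded earlier in the paper, namely equation (\ref{ress2}):
\begin{equation*}
s_{5}(h,k)=\frac{1}{2k}\sum_{\substack{j=1\\ j\neq \frac{k+1}{2}}}^{k}\tan\left(\frac{\pi h(2j-1)}{2k}\right)\cot\left(\frac{\pi(2j-1)}{2k}\right),
\end{equation*}
which is valid precisely under the hypothesis that $h$ and $k$ are odd (and coprime, $k>0$) — matching the hypotheses of the present theorem exactly. Note that the excluded index $j=\frac{k+1}{2}$ is an integer here because $k$ is odd, so the sum is well defined and no extra case analysis is needed.

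Substituting this expression for $s_{5}(h,k)$ into the formula from Theorem \ref{the7} and multiplying through by $h$ inside the leading coefficient yields
\begin{equation*}
B_{1}(h,k)=\frac{h}{2k}\sum_{\substack{j=1\\ j\neq \frac{k+1}{2}}}^{k}\tan\left(\frac{\pi h(2j-1)}{2k}\right)\cot\left(\frac{\pi(2j-1)}{2k}\right)+\frac{1}{2k}-\frac{1}{2},
\end{equation*}
which is the claimed identity. There is no genuine obstacle: the proof is a direct citation of Theorem \ref{the7} combined with the known finite trigonometric identity (\ref{ress2}) for $s_{5}(h,k)$, exactly parallel to how Theorem \ref{the10} follows from Theorem \ref{the7} and (\ref{ew6}). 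The only thing to double-check is that the hypotheses for (\ref{ress2}) and for Theorem \ref{the7} coincide with those of the present statement, which they plainly do.
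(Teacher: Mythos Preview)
Your proposal is correct and follows exactly the same approach as the paper: the paper's proof simply cites Theorem \ref{the7} together with equation (\ref{ress2}) and substitutes one into the other. Nothing more is needed.
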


\begin{proof}
It can be directly obtain from Theorem \ref{the7} and equation (\ref{ress2}).
\end{proof}

Now, we will give the relation between the sums $B_{1}(h,k)$ and the
Fibonacci numbers.
\begin{theorem}
\label{the14} Let $h$ and $\ k$ be positive integers and $\left\{
h,k\right\} $ is a symmetric pair. If $(h,k)=1$, $h=F_{6n-1}$ and $%
k=F_{6n+1} $ with $n$ is a natural number, where $F_{m}$ is the $m-$th
Fibonacci number, then%
\begin{equation*}
kB_{1}(h,k)+hB_{1}(k,h)=\frac{h^{2}-h-k+k^{2}}{2}-hk+1.
\end{equation*}
\end{theorem}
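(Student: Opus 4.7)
The plan is to mimic the proof of Theorem \ref{the8} almost verbatim, but feed in the \emph{symmetric-pair} version of the reciprocity law for $s_5$ (equation \eqref{kuch}) instead of the general odd-$h$-odd-$k$ reciprocity \eqref{eq5}. First I would observe that the hypotheses are consistent with Theorem \ref{the7}: since $F_m$ is even precisely when $3\mid m$, and $6n\pm 1\not\equiv 0\pmod 3$, both $h=F_{6n-1}$ and $k=F_{6n+1}$ are odd. They are coprime by assumption, and $k>0$, so Theorem \ref{the7} applies in both orderings.

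Next I would write down the two instances of \eqref{rff}:
\begin{equation*}
B_{1}(h,k)=hs_{5}(h,k)+\frac{1}{2k}-\frac{1}{2},\qquad B_{1}(k,h)=ks_{5}(k,h)+\frac{1}{2h}-\frac{1}{2}.
\end{equation*}
Multiplying the first by $k$ and the second by $h$ and adding side by side (exactly as in \eqref{sanam}) gives
\begin{equation*}
kB_{1}(h,k)+hB_{1}(k,h)=hk\bigl(s_{5}(h,k)+s_{5}(k,h)\bigr)+1-\frac{h+k}{2}.
\end{equation*}

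The only place where the present theorem diverges from Theorem \ref{the8} is the evaluation of $s_{5}(h,k)+s_{5}(k,h)$. Because $\{h,k\}=\{F_{6n-1},F_{6n+1}\}$ is a symmetric pair, the Corollary preceding Section 2 (equation \eqref{kuch}) applies, and yields
\begin{equation*}
hk\bigl(s_{5}(h,k)+s_{5}(k,h)\bigr)=\frac{hk}{2}\left(\frac{h}{k}+\frac{k}{h}-2\right)=\frac{h^{2}+k^{2}}{2}-hk.
\end{equation*}

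Substituting this into the previous display gives
\begin{equation*}
kB_{1}(h,k)+hB_{1}(k,h)=\frac{h^{2}+k^{2}}{2}-hk+1-\frac{h+k}{2}=\frac{h^{2}-h-k+k^{2}}{2}-hk+1,
\end{equation*}
which is the desired formula. There is essentially no obstacle here: all the heavy lifting was done in Theorem \ref{the7} and in the symmetric-pair corollary \eqref{kuch}. The only small thing to verify is the parity/coprimality check at the start, which is what legitimises applying Theorem \ref{the7} twice, and this follows immediately from the divisibility rule for even Fibonacci numbers.
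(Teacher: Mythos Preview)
Your proof is correct and follows essentially the same route as the paper: derive equation \eqref{sanam} from two applications of Theorem \ref{the7}, then substitute the symmetric-pair identity \eqref{kuch} in place of \eqref{eq5}. Your added parity check for $F_{6n\pm 1}$ is a welcome justification that the paper leaves implicit.
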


\begin{proof}
It can be obtained similarly with Theorem \ref{the8}'s proof. From Theorem 
\ref{the8}'s proof, we know that equation (\ref{sanam}) holds. If we also
use the equation (\ref{kuch}) into equation (\ref{sanam}), then we get
desired result.
\end{proof}

\end{document}